\newtheorem{remark}{Remark}
\newtheorem{theorem}{Theorem}[section]
\newtheorem{lemma}[theorem]{Lemma}
\newtheorem{algorithm}[theorem]{Algorithm}
\numberwithin{equation}{section}
\def\qed{\hfill$\Box$\vspace{8pt}}
\begin{document}

\title{More than one Author with different Affiliations}
\author[1]{Xuyang Na}
\author[1,2]{Xuejun Xu}

\affil[1]{\small School of Mathematical Science, Tongji University, Shanghai 200092, China}
\affil[2]{\small Institute of Computational Mathematics, AMSS, Chinese Academy of Sciences, Beijing 100190, China}
\title{Domain Decomposition Methods for Elliptic Problems with High Contrast Coefficients Revisited}\date{}
\maketitle

{\bf{Abstract}:}
In this paper, we revisit the nonoverlapping domain decomposition methods for solving elliptic problems with high contrast coefficients. Some interesting results are discovered. We find that the Dirichlet-Neumann algorithm and Robin-Robin algorithms may make full use of the ratio of coefficients. Actually, in the case of two subdomains, we show that their convergence rates are $O(\epsilon)$, if $\nu_1\ll\nu_2$, where $\epsilon = \nu_1/\nu_2$ and $\nu_1,\nu_2$ are coefficients of two subdomains. Moreover, in the case of many subdomains, the condition number bounds of Dirichlet-Neumann algorithm and Robin-Robin algorithm are $1+\epsilon(1+\log(H/h))^2$ and $C+\epsilon(1+\log(H/h))^2$, respectively,  where $\epsilon$ may be a very small number in the high contrast coefficients case. Besides, the convergence behaviours of the Neumann-Neumann algorithm and Dirichlet-Dirichlet algorithm may be independent of coefficients while they could not benefit from the discontinuous coefficients. Numerical experiments are preformed to confirm our theoretical findings.

{\bf{Keywords}:} \hspace*{2pt}Diffusion problem,\ \ Discontinuous coefficients,\ \ Finite elements,\ \ Domain decomposition.

\section{Introduction}
Diffusion problem is a quite important model which is encountered in many physical problems and practical application fields. It is of great significance to solve diffusion equations numerically. One of the difficulties is that the diffusion coefficients are usually strongly discontinuous. A natural choice to overcome the difficulty is to use nonoverlapping domain decomposition (DD) methods to solve such kind of problems. Actually, there are lots of literature in the study of solving strong discontinuous problems by nonoverlapping DD methods. For instance, Mandel and Brezina\cite{1996balancing} develop a balancing domain decomposition method for steady-state diffusion problem. In \cite{widlund2001feti}, a FETI algorithm is proposed and it is proved that the bounds on the rate of convergence are independent of possible jumps of the coefficients. In \cite{1994Schwarz,1997Nonstandard}, Sarkis design Schwarz preconditioners for discontinuous coefficients problems by using both conforming and non-conforming elements. In \cite{MR2037041}, a Robin-Robin preconditioner is proposed for advection-diffusion problems with discontinuous coefficients. For more study of this aspect, we refer to \cite{toselli2006domain,mathewddm} and the references cited therein.\\
\indent
We may find that the algorithms in most of the literature achieve convergence rates or condition number bounds independent of the jumps of coefficients. Wether there is a better result in the high contrast coefficients case? The answer is absolutely yes. In this paper, we are interested in investigating how the discontinuous coefficients influence the convergence behaviours of Dirichelt-Neumann (D-N) algorithm, Neumann-Neumann (N-N) algorithm, Dirichlet-Dirichelt (D-D) algorithm and Robin-Robin (R-R) algorithm in the case of two subdomains and the case of many subdomains. We find that the large jumps of coefficients may accelerate the iteration of D-N algorithm and R-R algorithm, which is quite different from many other algorithms. Detailedly, if we suppose $\nu_1, \nu_2$ are the discontinuous coefficients in the case of two subdomains, then the convergence rates of the D-N algorithm and the R-R algorithm will completely depend on the ratio of the smaller coefficient to the larger coefficient, i.e. $\nu_1/\nu_2$ if $\nu_1\ll\nu_2$. Here we should clear that unlike the high contrast coefficients case, the convergence rates of D-N algorithm and R-R algorithm are bounded by a constant which is independent of mesh size $h$ and less than 1 strictly in the case $\nu_1$ equals to $\nu_2$. In the case of many subdomains, the D-N algorithm and the R-R algorithm are always regarded as preconditioned methods and the corresponding condition number bounds are $1+\epsilon(1+\log(H/h))^2$ and $C+\epsilon(1+\log(H/h))^2$, respectively, where $\epsilon$ only depends on the ratio of the high contrast coefficients and the higher the contrast is, the smaller the value of $\epsilon$ is. Gander and Dubois\cite{MR3339213} also find a similar phenomenon in the case of two symmetric subdomains. But they use the Fourier analysis to analyze them, as a result, their result is hard to extend to the case of many subdomains. In this paper, we estimate the convergence rate by analyzing the spectra radium of error reduction operators and analyzing the condition numbers of preconditioned systems. Therefore, our results hold in the case of two subdomains and the case of many subdomains. Besides, we prove that the N-N algorithm and D-D algorithm could never take advantage of the high contrast in discontinuous coefficients unless they deteriorate to D-N algorithm while they may be independent of the jumps of coefficients by choosing suitable weights. Roughly speaking, we can explain the phenomena as follows: the D-N algorithm and R-R algorithm use information of half the subdomains to precondition the whole system and the ratio of coefficients will be reserved in the estimates of convergence rates and condition number bounds; in contrast, energy norms of $\Omega_1$ and $\Omega_2$ need to be controlled by each other in the N-N algorithm and D-D algorithm, therefore, suitable weights are essential to get condition number bounds independent of coefficients. All the results are confirmed by numerical experiments.\\
\indent
The paper is organized as follows: In section 2, we introduce the model problem and four domain decomposition methods. In section 3, we analyze the influence of coefficients on convergence rates in the case of two subdomains with subdomains symmetric and nonsymmetric. In section 4, the preconditioned systems in the case of many subdomains are described and the bounds on the condition numbers are given. Finally, we perform several numerical experiments to verify our conclusions.

\section{Model problems and domain decomposition algorithms}
We consider the following elliptic problem with discontinuous coefficients:
\begin{equation}\label{model_problem}
\begin{cases}
\begin{array}{rll}
-\nabla\cdot(\nu(\mathbf{x})\nabla u) &= f\quad&{\rm in}\ \Omega, \\
u &= 0\quad&{\rm on}\ \partial\Omega, \\
\end{array}
\end{cases}
\end{equation}
where $\Omega$ is a bounded, two-dimensional polygonal domain and the diffusion coefficient $\nu(\mathbf{x})$ is a piecewise constant function
\[
\nu(\mathbf{x}) = \begin{cases}
\nu_1\quad \mathbf{x}\in \Omega_1,\\
\nu_2\quad \mathbf{x}\in \Omega_2.\\
\end{cases}
\]
Here $\Omega_1,\Omega_2$ are nonoverlapping subdomains which form a decomposition of $\Omega$ and $\Gamma$ denotes their common interface, i.e. $\Gamma = \partial\Omega_1\cap\partial\Omega_2$.\\
\indent
Let $\mathcal{T}_h$ be a quasi-uniform and regular triangulation of $\Omega$ with the mesh size $h$ and assume that interface $\Gamma$ does not cut through any elements of $\mathcal{T}_h$. Let $W\subset H_0^1(\Omega)$ be a P1 conforming finite element space over $\mathcal{T}_h$. Besides, we need the following finite element spaces,
\begin{equation*}
W_i = W\cap H^1(\Omega_i),\quad W_i^0 = W\cap H^1_0(\Omega_i),\quad i = 1,2,
\end{equation*}
and the space of the interface $\Gamma$,
\begin{equation*}
V_{\Gamma} = W|_{\Gamma}.
\end{equation*}
Then, the weak form of (\ref{model_problem}) is as follows: Find $u\in W$, such that
\[
a(u,v) = (f,v)\quad \forall v\in W,
\]
where
\[
a_i(u,v) = \int_{\Omega_i}\nu_i\nabla u\cdot\nabla v \quad\forall u,v\in W_i, i=1,2,
\]
\[
(f,v)_i = \int_{\Omega_i}fv \quad\forall v\in W_i, i=1,2,
\]
and
\[
a(u,v) = a_1(u,v)+a_2(u,v),\quad (f,v) = (f,v)_1+(f,v)_2.
\]
We also use the following bilinear form on the interface,
\[
\langle u,v\rangle = \int_{\Gamma}uv\quad\forall u,v\in V_{\Gamma}.
\]
\indent
The model problem may be written equivalently in the following multidomain formulation:
\[
\begin{cases}
\begin{array}{rll}
-\nu_1\Delta u &= f\quad &{\rm in\ }\Omega_1,\\
u_1 &= u_2,\quad &{\rm on\ }\Gamma,\\
\nu_1\dfrac{\partial u_1}{\partial\mathbf{n}_1}&= -\nu_2\dfrac{\partial u_2}{\partial\mathbf{n}_2} &{\rm on\ }\Gamma,\\
-\nu_2\Delta u &= f\quad &{\rm in\ }\Omega_2.\\
\end{array}
\end{cases}
\]
The second and the third equations corresponding to Dirichlet and Neumann boundary conditions are imposed to ensure the continuity of the solution and the flux across the interface $\Gamma$. To solve the multidomain problem, we have the following three iterative methods and we would like to write them into weak forms.

\begin{algorithm}[The Dirichlet-Neumann Algorithm\cite{toselli2006domain}]\label{DNalgorithm}
Given $u_{\Gamma}^0 (=0)\in V_{\Gamma}$, compute as the following steps until converge,\\
\textbf{Step 1} solve the Dirichlet problem in $\Omega_1$,
\begin{equation*}
\begin{cases}
\begin{array}{rll}
a_1(u_1^{n+1},v) &= (f,v)_1 \quad &\forall v\in W_1^0,\\
u_1^{n+1} &= 0\quad &{\rm on\ }\partial\Omega_1\backslash\Gamma,\\
u_1^{n+1} &= u_{\Gamma}^{n}\quad &{\rm on\ }\Gamma,\\
\end{array}
\end{cases}
\end{equation*}
\textbf{Step 2} solve the Neumann problem in $\Omega_2$,
\begin{align*}
a_2(u_2^{n+1},v) &=(f,v)_2-\langle\nu_1\frac{\partial u_1^{n+1}}{\partial\mathbf{n}_1},v\rangle\\
 &= (f,v)_2+(f,T_1\gamma_0v)_1-a_1(u_1^{n+1},T_1\gamma_0v)\quad \forall v\in W_2,
\end{align*}
where $T_i: V_{\Gamma}\rightarrow W_i$ is an arbitrary extension operator,\\
\textbf{Step 3} get the next iterate by a relaxation,\\
\[
u_{\Gamma}^{n+1} = \theta u_2^{n+1}+(1-\theta)u_{\Gamma}^n\quad {\rm on\ }\Gamma
\]\\
with an appropriate $\theta$.
\end{algorithm}
\begin{algorithm}[The Neumann-Neumann Algorithm\cite{toselli2006domain}]\label{NNalgorithm}
Given $u_{\Gamma}^0 (=0)\in V_{\Gamma}$, compute as the following steps until converge,\\
\textbf{Step 1} solve the Dirichlet problems in $\Omega_i, i =1,2,$
\[
\begin{cases}
\begin{array}{rll}
a_i(u_i^{n+1},v) &= (f,v)_i \quad &\forall v\in W_i^0,\\
u_i^{n+1} &= 0\quad &{\rm on\ }\partial\Omega_i\backslash\Gamma,\\
u_i^{n+1} &= u_{\Gamma}^{n}\quad &{\rm on\ }\Gamma,\\
\end{array}
\end{cases}
\]
\textbf{Step 2} solve the Neumann problems in $\Omega_i, i =1,2,$
\begin{align*}
a_i(w_i^{n+1},v) &=\delta_i^{\dag}\langle\nu_1\frac{\partial u_1^{n+1}}{\partial\mathbf{n}_1}+\nu_2\frac{\partial u_2^{n+1}}{\partial\mathbf{n}_2},v\rangle\\
 &= \delta_i^{\dag}\left(a_1(u_1^{n+1},T_1\gamma_0v)-(f,T_1\gamma_0v)_1+a_2(u_2^{n+1},T_2\gamma_0v)-(f,T_2\gamma_0v)_2\right)\quad \forall v\in W_i,
\end{align*}
where $\delta_1^{\dag}$ and $\delta_2^{\dag}$ are positive weights with $\delta_1^{\dag}+\delta_2^{\dag} = 1$,\\
\textbf{Step 3} get the next iterate by a relaxation,
\[
u_{\Gamma}^{n+1} = u_{\Gamma}^{n}-\theta(\delta_1^{\dag}w_1^{n+1}+\delta_2^{\dag}w_2^{n+1}),
\]
with an appropriate $\theta$.
\end{algorithm}
\begin{algorithm}[The Dirichlet-Dirichlet Algorithm\cite{toselli2006domain}]\label{DDalgorithm}
Given $\lambda_{\Gamma}^0 (=0)\in V_{\Gamma}$, compute as the following steps until converge,\\
\textbf{Step 1} set $\lambda_1^n = -\lambda_2^n = \lambda_{\Gamma}^n$, solve the Neumann problems with in $\Omega_i, i = 1,2,$
\[
a_i(u_i^{n+1},v) = (f,v)_i+\langle \lambda_i^n,v\rangle\quad\forall v\in H_{\Gamma}^1(\Omega_i),
\]
\textbf{Step 2} solve the Dirichlet problem in $\Omega_i, i=1,2$,
\[
\begin{cases}
\begin{array}{rll}
a_i(w_i^{n+1},v) &= 0\quad &\forall v\in W_i^0,\\
w_i^{n+1} &= 0 \quad &{\rm on\ }\partial\Omega_i\backslash\Gamma,\\
w_i^{n+1} &= \delta_i^{\dag}(u_1^{n+1}-u_2^{n+1})\quad &{\rm on\ }\Gamma,
\end{array}
\end{cases}
\]
where $\delta_1^{\dag}$ and $\delta_2^{\dag}$ are positive weights with $\delta_1^{\dag}+\delta_2^{\dag} = 1$,\\
\textbf{Step 3} get the next iterate by a relaxation,
\[
\lambda_{\Gamma}^{n+1} = \lambda_{\Gamma}^n-\theta(\delta_1^{\dag}\nu_1\frac{\partial w_1^{n+1}}{\partial\mathbf{n}_1}+\delta_2^{\dag}\nu_2\frac{\partial w_2^{n+1}}{\partial\mathbf{n}_2}),
\]
with an appropriate $\theta$.
\end{algorithm}

The matching conditions may be changed equivalently by the combinations of the Dirichlet and Neumann interface conditions as follows:
\[
\begin{cases}
\gamma_1u_1+\nu_1\dfrac{\partial u_1}{\partial\mathbf{n}_1} = \gamma_1u_2+\nu_2\dfrac{\partial u_2}{\partial\mathbf{n}_1} =:g_1\quad{\rm on\ }\Gamma\\
\gamma_2u_2+\nu_2\dfrac{\partial u_2}{\partial\mathbf{n}_2} = \gamma_2u_1+\nu_1\dfrac{\partial u_1}{\partial\mathbf{n}_2} =:g_2\quad{\rm on\ }\Gamma\\
\end{cases}
\]
where the Robin parameters $\gamma_1,\gamma_2$ are positive numbers. Therefore, we have the following Robin-Robin algorithm.
\begin{algorithm}[The Robin-Robin Algorithm\cite{chen2014optimal}]\label{RRalgorithm}
Given $g_1^0 (=0)\in V_{\Gamma}, \gamma_1,\gamma_2 > 0$, compute as the following steps until converge,\\
\textbf{Step 1} solve the problem with Robin boundary condition in $\Omega_1$,
\[
a_1(u_1^n,v)+\gamma_1\langle u_1^n,v\rangle = (f,v)_1+\langle g_1^n,v\rangle\quad\forall v\in W_1,
\]
\textbf{Step 2} update the interface condition,
\[
g_2^n = (\gamma_1+\gamma_2)u_1^n-g_1^n\quad{\rm on\ }\Gamma,
\]
\textbf{Step 3} solve the problem with Robin boundary condition in $\Omega_2$,
\[
a_2(u_2^n,v)+\gamma_2\langle u_2^n,v\rangle = (f,v)_2+\langle g_2^n,v\rangle\quad\forall v\in W_2,
\]
\textbf{Step 4} update the interface condition,
\[
\tilde{g}_1^n = (\gamma_1+\gamma_2)u_2^n-g_2^n\quad{\rm on\ }\Gamma,
\]
\textbf{Step 5} get the next iterate by a relaxation,
\[
g_1^{n+1} = \theta\tilde{g}_1^n+(1-\theta)g_1^n,
\]
with an appropriate $\theta$.
\end{algorithm}

\section{Influence of discontinuous coefficients on convergence rates}
In this section, we will explore the influence of discontinuous coefficients on convergence rates and confirm the optimal parameters of the algorithms in the previous section.\\
\indent
First, we give some preliminaries. Define $\mathcal{H}_i: V_{\Gamma}\rightarrow W_i$ as follows:
\[
\begin{cases}
\begin{array}{rll}
a_i(\mathcal{H}_iu_{\Gamma},v) &= 0\quad&\forall v\in W_i^0,\\
\mathcal{H}_iu_{\Gamma} &= 0 \quad&{\rm on\ }\partial\Omega_i\backslash\Gamma,\\
\mathcal{H}_iu_{\Gamma} &= u_{\Gamma}\quad &{\rm on\ }\Gamma,
\end{array}
\end{cases}
\]
The operator $\mathcal{H}_i$ is known as the `discrete harmonic extension'. We note that the coefficient $\nu_i$ in $a_i(\cdot,\cdot)$ can be omitted because of the zero source term. Define $S_i$ to be a linear operator as follows:
\[
\langle S_iu_{\Gamma},v_{\Gamma}\rangle = a_i(\mathcal{H}_iu_{\Gamma},T_iv_{\Gamma})\quad \forall v_{\Gamma}\in V_{\Gamma},
\]
where $T_i:V_{\Gamma}\rightarrow W_i$ is an arbitrary extension operator. Obviously, $S_i$ is symmetric and positive definite.
Then, we will give the error operators of the four DD algorithms in the following lemma. Actually, the proof of the following lemma may be found in \cite{toselli2006domain} and \cite{chen2014optimal}. For completeness, we give a brief proof here.
\begin{lemma}
The error operators of the D-N algorithm, R-R algorithm, D-D algorithm and R-R algorithm are $R_1, R_2, R_3$ and $R_4$, respectively, where
\begin{align*}
R_1 &= I-\theta S_2^{-1}(S_1+S_2),\hfill\\
R_2 &= I-\theta (D_1S_1^{-1}D_1+D_2S_2^{-1}D_2)(S_1+S_2),\hfill\\
R_3 &= I-\theta(D_1S_1 D_1+D_2S_2 D_2)(S_1^{-1}+S_2^{-1})\hfill
\end{align*}
and
\begin{align*}
R_4 &= I-\theta(\gamma_1I-S_2)(\gamma_2I+S_2)^{-1}((\gamma_2I+S_2)(\gamma_1I-S_2)^{-1}-(\gamma_2I-S_1)(\gamma_1I+S_1)^{-1}) \\
&=I-\theta(I-(\gamma_1I-S_2)(\gamma_2I+S_2)^{-1}(\gamma_2I-S_1)(\gamma_1I+S_1)^{-1}).
\end{align*}
\end{lemma}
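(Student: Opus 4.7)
The overall strategy is to reduce each of the four subdomain iterations to an equivalent iteration on the interface, written purely in terms of the Steklov--Poincar\'e operators $S_1,S_2$, and then to subtract the fixed-point equation satisfied by the exact discrete solution. For each algorithm I would first record three facts at the discrete level. First, for data $u_\Gamma\in V_\Gamma$ and source $f$, the Dirichlet subproblem on $\Omega_i$ has the unique solution $u_i=\mathcal{H}_i u_\Gamma + u_i^f$ where $u_i^f\in W_i^0$ solves $a_i(u_i^f,\cdot)=(f,\cdot)_i$. Second, for any $v\in W_i$, the Neumann step is equivalent on the trace to $S_i u_i^{n+1}|_\Gamma = \chi_i \pm S_j u_j^{n+1}|_\Gamma$ with an explicit $f$-dependent $\chi_i$, since $a_i(u_i,T_i v_\Gamma)=\langle S_i u_i|_\Gamma,v_\Gamma\rangle$ plus the appropriate interior source contribution. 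Third, the exact discrete solution $u^*$ satisfies the Schur equation $(S_1+S_2)u^*|_\Gamma=\chi_1+\chi_2$.

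With these in hand, the D-N error operator follows immediately: Step 1 sets $u_1^{n+1}|_\Gamma=u_\Gamma^n$, so Step 2 reads $S_2 u_2^{n+1}|_\Gamma=\chi_1+\chi_2-S_1 u_\Gamma^n$; subtracting the Schur identity and combining with the relaxation in Step 3 yields $R_1=I-\theta S_2^{-1}(S_1+S_2)$. The N-N error operator arises analogously: the Neumann corrections in Step 2 satisfy $w_i^{n+1}|_\Gamma=\delta_i^\dag S_i^{-1}(S_1+S_2)(u_\Gamma^n-u^*|_\Gamma)$, and the $\delta_i^\dag$-weighted update in Step 3 gives $R_2$. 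For the D-D algorithm one plays the same game on the flux variable $\lambda_\Gamma$: Step 1 yields $u_i^{n+1}|_\Gamma=S_i^{-1}(\chi_i\pm\lambda_\Gamma^n)$, so $u_1^{n+1}-u_2^{n+1}=(S_1^{-1}+S_2^{-1})(\lambda_\Gamma^n-\lambda^*)$; the discrete-harmonic correction in Step 2 reproduces the normal flux as $\nu_i\partial w_i^{n+1}/\partial n_i = S_i w_i^{n+1}|_\Gamma$; and the weighted sum in Step 3 produces $R_3$.

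The R-R case is the one I expect to be most delicate. The Robin subproblem on $\Omega_i$ is equivalent on the interface to $(S_i+\gamma_i I)u_i^n|_\Gamma=\tilde\chi_i+g_i^n$, so Step 1 yields $u_1^n|_\Gamma=(\gamma_1 I+S_1)^{-1}(\tilde\chi_1+g_1^n)$. Substituting successively into Steps 2, 3 and 4 expresses $\tilde g_1^n$ as an affine function of $g_1^n$ whose homogeneous multiplier is $(\gamma_1 I-S_2)(\gamma_2 I+S_2)^{-1}(\gamma_2 I-S_1)(\gamma_1 I+S_1)^{-1}$, since the trace operation $(\gamma_1+\gamma_2)u_i-g_i$ corresponds to the Robin-to-Robin map $(\gamma_j I-S_i)(\gamma_i I+S_i)^{-1}$ acting on incoming Robin data. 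Subtracting the fixed point $g_1^*$ (at which $\tilde g_1^*=g_1^*$) and applying the relaxation in Step 5 gives the second displayed form of $R_4$. The first form is just an algebraic rewriting: since $(\gamma_1 I-S_2)$, $(\gamma_2 I+S_2)^{-1}$ and their inverses all commute as polynomials in $S_2$, one has $(\gamma_1 I-S_2)(\gamma_2 I+S_2)^{-1}\cdot(\gamma_2 I+S_2)(\gamma_1 I-S_2)^{-1}=I$, and factoring $(\gamma_1 I-S_2)(\gamma_2 I+S_2)^{-1}$ out on the left rewrites the second form as the first.

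The main obstacle is purely bookkeeping: one has to match the sign conventions of the outward normals $\mathbf n_1,\mathbf n_2$, the choice of extension operators $T_i$, and the source contributions $\chi_i,\tilde\chi_i$ carefully enough that the Schur identity satisfied by $u^*$ (or the analogous identities for $\lambda^*$ and $g_1^*$) cancels the inhomogeneous parts of each iteration exactly, leaving only the operator expressions stated for $R_1,\ldots,R_4$. Once the three identities in the first paragraph are set up cleanly, each of the four derivations is then essentially a one-line substitution.
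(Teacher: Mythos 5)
Your proposal is correct and follows essentially the same route as the paper: express each substep's interface trace through the Schur complements $S_i$, then combine with the relaxation; your ``subtract the fixed point'' framing is exactly the paper's ``reduce to the homogeneous case $f\equiv 0$ by linearity.'' All four derivations, including the Robin-to-Robin map $(\gamma_j I-S_i)(\gamma_i I+S_i)^{-1}$ and the algebraic factoring that links the two forms of $R_4$, match the paper's proof.
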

\begin{proof}
To deduct the error operators, it is sufficient to consider the homogeneous case, $f\equiv 0$, by linearity. For simplicity, we use the same letters to denote the functions and corresponding errors in the proof without causing any confusion.\\
\indent
We first consider the D-N algorithm. From the definition of discrete harmonic extension and Step 1 of Algorithm \ref{DNalgorithm}, we know $u_1^{n+1} = \mathcal{H}_1u_{\Gamma}^n$, then by the definition of $S_i$ and Step 2 of Algorithm \ref{DNalgorithm}, we have
\begin{equation}\label{dnop1}
a_2(u_2^{n+1},T_2\gamma_0v) = -a_1(u_1^{n+1},T_1\gamma_0v) = -\langle S_1u_{\Gamma}^n,v\rangle\quad\forall v\in V_{\Gamma}.
\end{equation}
Here we note that if we set $v = 0$ in (\ref{dnop1}), we have
\begin{equation}\label{dnop3}
a_2(u_2^{n+1},w) = 0\quad\forall w\in W_2^0,
\end{equation}
which reflects that $u_2^{n+1} = \mathcal{H}_2(u_2^{n+1}|_{\Gamma})$. Therefore, it holds that
\begin{equation*}
\langle S_2(u_2^{n+1}|_{\Gamma}),v\rangle = -\langle S_1u_{\Gamma}^n,v\rangle\quad\forall v\in V_{\Gamma}.
\end{equation*}
Because $S_2$ is a positive definite operator, we have
\begin{equation}\label{dnop2}
u_2^{n+1}|_{\Gamma} = -S_2^{-1}S_1u_{\Gamma}^n.
\end{equation}
Combining (\ref{dnop2}) and Step 3 of Algorithm \ref{DNalgorithm}, we obtain the error operator of D-N algorithm, i.e.
\begin{align*}
u_{\Gamma}^{n+1} &= \theta u_2^{n+1}|_{\Gamma}+(1-\theta)u_{\Gamma}^n\\
&= -\theta S_2^{-1}S_1u_{\Gamma}^n+(1-\theta)u_{\Gamma}^n\\
&= (I-\theta S_2^{-1}(S_1+S_2))u_{\Gamma}^n.
\end{align*}
\indent
We next consider the N-N algorithm. From the definition of $S_i$ and Step 1, Step 2 of Algorithm \ref{NNalgorithm}, we have
\begin{equation*}
a_i(w_i^{n+1},T_i\gamma_0v) = \delta_i^{\dag}(\langle S_1u_{\Gamma}^n,v\rangle+\langle S_2u_{\Gamma}^n,v\rangle).
\end{equation*}
Similar to (\ref{dnop3}), we have $w_i^{n+1} = \mathcal{H}_i(w_i^{n+1}|_{\Gamma})$. Therefore, it holds that
\begin{equation}\label{nnop1}
w_i^{n+1}|_{\Gamma} = S_i^{-1}(\delta_i^{\dag}(S_1+S_2)u_{\Gamma}^n).
\end{equation}
By (\ref{nnop1}) and Step 3 of Algorithm \ref{NNalgorithm}, the error operator is obtained as follows:
\begin{align*}
u_{\Gamma}^{n+1} &= u_{\Gamma}^n-\theta(\delta_1^{\dag}S_1^{-1}(\delta_1^{\dag}(S_1+S_2)u_{\Gamma}^n)+\delta_2^{\dag}S_2^{-1}(\delta_2^{\dag}(S_1+S_2)u_{\Gamma}^n))\\
&= \left(I-\theta(D_1S_1^{-1}D_1+D_2S_2^{-1}D_2)(S_1+S_2)\right)u_{\Gamma}^n,
\end{align*}
where $D_i = \delta_i^{\dag}I, i=1,2.$\\
\indent
The error operator of D-D algorithm may be derived similar to the N-N algorithm. By the definitions of $\mathcal{H}_i,S_i$ and Step 1,2 of Algorithm \ref{DDalgorithm}, we have
\begin{align*}
w_i^{n+1} &= \mathcal{H}_i(\delta_i^{\dag}(u_1^{n+1}|_{\Gamma}-u_2^{n+1}|_{\Gamma}))\\
&= \mathcal{H}_i(\delta_i^{\dag}(S_1^{-1}\lambda_1^n-S_2^{-1}\lambda_2^n))\\
&= \mathcal{H}_i(\delta_i^{\dag}(S_1^{-1}+S_2^{-1})\lambda_{\Gamma}^n).
\end{align*}
Then by the interface update condition in Step 3, we get
\begin{equation*}
\lambda_{\Gamma}^{n+1} = \left(I-\theta(D_1S_1D_1+D_2S_2D_2)(S_1^{-1}+S_2^{-1})\right)\lambda_{\Gamma}^n.
\end{equation*}
\indent
As to the R-R algorithm, for $i = 1,2$, we have the following error equation,
\begin{equation*}
a_i(u_i^n,T_i\gamma_0v)+\gamma_i\langle u_i^n|_{\Gamma},v\rangle = \langle g_i^n,v\rangle\quad\forall v\in V_{\Gamma}.
\end{equation*}
By the definitions of $\mathcal{H}_i, S_i, i = 1,2$, we have
\begin{equation}\label{rrop1}
g_i^n = (\gamma_iI+S_i)u_i^n|_{\Gamma}.
\end{equation}
Using the interface update in Step 2 of Algorithm \ref{RRalgorithm} and (\ref{rrop1}), we have
\begin{align}\label{rrop2}
g_2^n &= (\gamma_1+\gamma_2)u_1^n|_{\Gamma}-g_1^n\nonumber\\
&= (\gamma_1+\gamma_2)(\gamma_1I+S_1)^{-1}g_1^n-(\gamma_1I+S_1)(\gamma_1I+S_1)^{-1}g_1^n\nonumber\\
&= (\gamma_2I-S_1)(\gamma_1I+S_1)^{-1}g_1^n.
\end{align}
Then by the second interface update in Step 4 of Algorithm \ref{RRalgorithm}, (\ref{rrop2}) and (\ref{rrop1}), it holds that
\begin{align}\label{rrop3}
\tilde{g}_1^n &= (\gamma_1+\gamma_2)u_2^n-g_2^n\nonumber\\
&= (\gamma_1+\gamma_2)(\gamma_2I+S_2)^{-1}g_2^n-(\gamma_2I+S_2)(\gamma_2I+S_2)^{-1}g_2^n\nonumber\\
&= (\gamma_1I-S_2)(\gamma_2I+S_2)^{-1}g_2^n\nonumber\\
&= (\gamma_1I-S_2)(\gamma_2I+S_2)^{-1}(\gamma_2I-S_1)(\gamma_1I+S_1)^{-1}g_1^n.
\end{align}
At last, by the relaxation step and (\ref{rrop3}), we obtain the error operator of R-R algorithm, i.e.
\begin{align*}
g_1^{n+1} &= \theta\tilde{g}_1^n+(1-\theta)g_1^n\\
&= \left(I-\theta(I-(\gamma_1I-S_2)(\gamma_2I+S_2)^{-1}(\gamma_2I-S_1)(\gamma_1I+S_1)^{-1})\right)g_1^n\\
&= \left(I-\theta(\gamma_1I-S_2)(\gamma_2I+S_2)^{-1}((\gamma_2I+S_2)(\gamma_1I-S_2)^{-1}-(\gamma_2I-S_1)(\gamma_1I+S_1)^{-1})\right)g_1^n.
\end{align*}
\qed
\end{proof}\\
\indent
The next task is to analyze how the spectral radius of $R_i$ rely on the discontinuous coefficients $\nu_1,\nu_2$ and how to choose appropriate $\theta$ and $\delta_i^{\dag}$ to accelerate the iterations.
\subsection{Symmetric case}
In this subsection, we suppose that $\Omega_1$ and $\Omega_2$ are symmetric with respect to $\Gamma$.
\begin{figure}[h]
\centering
\includegraphics[width=180mm]{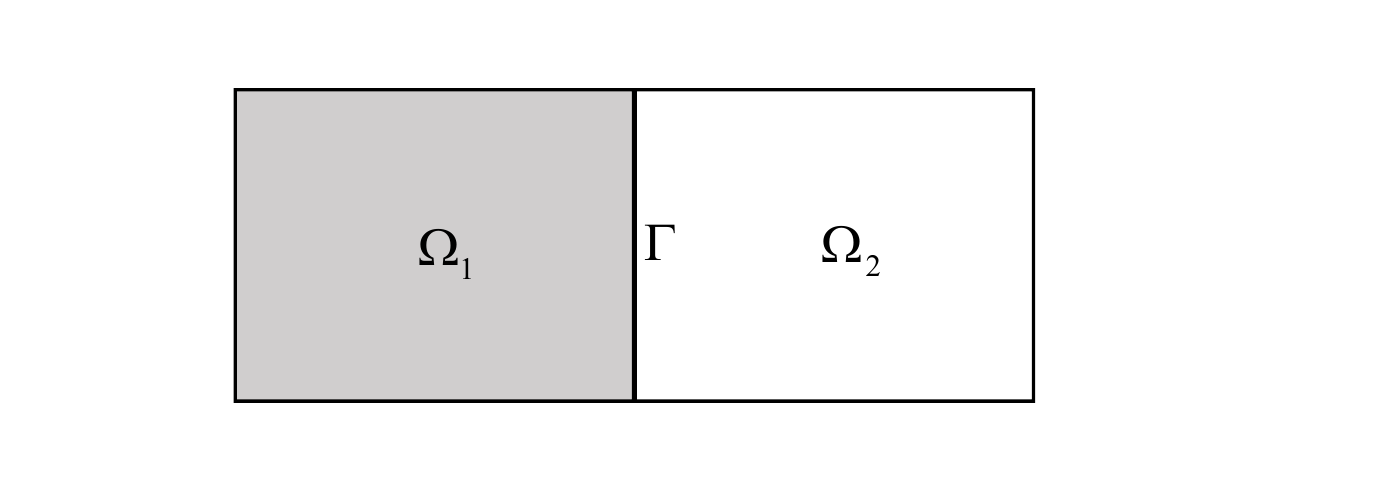}
\caption{$\Omega$ is divided into two symmetric subdomains $\Omega_1,\Omega_2$ and their interface $\Gamma$.}
\label{redblack}
\end{figure}
Then we have
\begin{equation}\label{symmeq}
S_1/\nu_1 = S_2/\nu_2.
\end{equation}
Therefore, $S_1$ and $S_2$ have the same eigenvectors. Let $\nu_1\lambda$ and $\nu_2\lambda$ be the eigenvalues of $S_1$ and $S_2$ corresponding to eigenvector $v$. Then it is easy to check that
\begin{align*}
&\lambda(R_1) = 1-\theta(1+\dfrac{\nu_1}{\nu_2}),\\
&\lambda(R_2) = 1-\theta((\delta_1^{\dag})^2(1+\dfrac{\nu_2}{\nu_1})+(\delta_2^{\dag})^2(1+\dfrac{\nu_1}{\nu_2})),\\
&\lambda(R_3) = 1-\theta((\delta_1^{\dag})^2(1+\dfrac{\nu_1}{\nu_2})+(\delta_2^{\dag})^2(1+\dfrac{\nu_2}{\nu_1})),\\
&\lambda(R_4) = 1-\theta(1-\frac{\gamma_1-\nu_2\lambda}{\gamma_1+\nu_1\lambda}\cdot\frac{\gamma_2-\nu_1\lambda}{\gamma_2+\nu_2\lambda}),
\end{align*}
where $\lambda(T)$ denotes the eigenvalue of operator $T$.\\
\indent
We may find that the eigenvalues of $R_1,R_2,R_3$ are independent of $\lambda$, thus we have
\begin{theorem}\label{0convergence}
In the symmetric case, the convergence rate of algorithm \ref{DNalgorithm}, \ref{NNalgorithm}, \ref{DDalgorithm} can be reduced to $0$ by choosing suitable $\theta$, that is to say, the methods become direct solvers.
\end{theorem}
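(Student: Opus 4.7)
The plan is to exploit the explicit eigenvalue formulas for $R_1,R_2,R_3$ displayed just above the theorem. Since these formulas do not involve the eigenvalue $\lambda$ of $S_i$ at all, each $R_j$ ($j=1,2,3$) is forced to be a scalar multiple of the identity, after which choosing $\theta$ to kill that scalar gives convergence in one iteration.

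First I would make explicit the structural consequence of the symmetry assumption (\ref{symmeq}): the operators $S_1$ and $S_2$ are scalar multiples of a single symmetric positive definite operator, so they commute and admit a common orthogonal eigenbasis $\{v_k\}$ of $V_\Gamma$ with $S_1 v_k = \nu_1\lambda_k v_k$ and $S_2 v_k = \nu_2\lambda_k v_k$. Every operator appearing in $R_1,R_2,R_3$ is a rational function of $S_1,S_2$, so the $v_k$ are also eigenvectors of $R_1,R_2,R_3$, and the eigenvalues are precisely the expressions $\lambda(R_1),\lambda(R_2),\lambda(R_3)$ displayed just before the theorem.

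Next I would point out the crucial cancellation: after substituting $S_i v_k=\nu_i\lambda_k v_k$ the factor $\lambda_k$ cancels in every term, so the displayed eigenvalues depend only on $\theta$, $\nu_1/\nu_2$ and the weights $\delta_i^\dagger$, not on $k$. Consequently $R_j=c_j(\theta)\,I$ for some scalar $c_j(\theta)$. It then suffices to solve $c_j(\theta)=0$ for $\theta$, giving
\[
\theta^\star_1=\frac{\nu_2}{\nu_1+\nu_2},\qquad
\theta^\star_2=\frac{1}{(\delta_1^\dagger)^2(1+\nu_2/\nu_1)+(\delta_2^\dagger)^2(1+\nu_1/\nu_2)},
\]
and the analogous value $\theta^\star_3$ for $R_3$ obtained by swapping the roles of $\nu_1$ and $\nu_2$ in the denominator of $\theta^\star_2$. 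With these choices $R_1,R_2,R_3$ are the zero operator, so the iterations terminate after a single step and act as direct solvers.

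The argument is essentially computational once the eigenvalue identities are in place, so there is no real obstacle; the only point requiring care is to verify that the weights $\delta_i^\dagger$ (with $\delta_1^\dagger+\delta_2^\dagger=1$) and the ratio $\nu_1/\nu_2$ are such that the denominators defining $\theta^\star_2$ and $\theta^\star_3$ are nonzero, which is immediate since all the quantities involved are strictly positive. Note that the fourth algorithm (R-R) is deliberately excluded from the statement: in $\lambda(R_4)$ the eigenvalue $\lambda$ genuinely appears and does not cancel, so the same one-step reduction argument does not apply and will be treated separately.
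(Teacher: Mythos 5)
Your proposal is correct and takes essentially the same route as the paper: the displayed eigenvalues of $R_1,R_2,R_3$ do not depend on $\lambda_k$, so each $R_j$ is a scalar multiple of the identity, and your $\theta^\star_1,\theta^\star_2,\theta^\star_3$ coincide exactly with the paper's $\theta_1,\theta_2,\theta_3$. The paper simply states these three values and concludes, whereas you additionally spell out the common-eigenbasis argument that justifies the reduction to scalars.
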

\begin{proof}
Set
\begin{align*}
&\theta_1 = \frac{1}{1+\dfrac{\nu_1}{\nu_2}},\\
&\theta_2 = \frac{1}{(\delta_1^{\dag})^2(1+\dfrac{\nu_2}{\nu_1})+(\delta_2^{\dag})^2(1+\dfrac{\nu_1}{\nu_2})},\\
&\theta_3 = \frac{1}{(\delta_1^{\dag})^2(1+\dfrac{\nu_1}{\nu_2})+(\delta_2^{\dag})^2(1+\dfrac{\nu_2}{\nu_1})},
\end{align*}
and we get the conclusion.\qed
\end{proof}
\begin{remark}
For a general $\theta$, the convergence behaviours will be quite different between D-N algorithm and N-N algorithm, D-D algorithm.\\
\indent
For a $\theta$ near $1$, the convergence rate of D-N algorithm relies on $\nu_1/\nu_2$ and if $\nu_1\ll\nu_2$, the iteration will perform quite well. In other word, the D-N algorithm benefit from the jump of discontinuous coefficients. But for N-N algorithm and D-D algorithm, we could not obtain such a good property. The range of function
\[
f(\delta_1^{\dag}) = (\delta_1^{\dag})^2(1+\nu_2/\nu_1)+(1-\delta_1^{\dag})^2(1+\nu_1/\nu_2)
\]
is $\left[1,\max\{1+\nu_1/\nu_2,1+\nu_2/\nu_1\}\right]$. So N-N algorithm could not benefit from the discontinuous coefficients. An optimal choice of $\delta_1^{\dag}, \delta_2^{\dag}$ is
\[
\delta_1^{\dag} = \frac{\sqrt{\nu_1}}{\sqrt{\nu_1}+\sqrt{\nu_2}},\quad \delta_2^{\dag} = \frac{\sqrt{\nu_2}}{\sqrt{\nu_1}+\sqrt{\nu_2}},
\]
then
\[
f(\delta_1^{\dag}) = \frac{2(\nu_1+\nu_2)}{(\sqrt{\nu_1}+\sqrt{\nu_2})^2}\in (1,2)
\]
and the convergence rate of N-N algorithm will be independent of $\nu_1, \nu_2$. The case of D-D algorithm is similar.
\end{remark}

\indent
For the Robin-Robin algorithm, we may find that the spectrum depends on the eigenvalue $\lambda$, which is different from other three algorithms. The following theorem can be obtained by analyzing the function $\lambda(R_4)$.
\begin{theorem}
In the symmetric case, the convergence rate of algorithm \ref{RRalgorithm} is bounded by $C\nu_1/\nu_2$ by choosing $\gamma_1 \ge C_1\nu_2 h^{-1}$, $0 < \gamma_2 \le c_0\nu_1$ and $\theta = \frac{2}{2+\nu_1/\nu_2}$.
\end{theorem}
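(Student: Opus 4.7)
The plan is to use the spectral formula for $R_4$ from the preceding lemma. In the symmetric case $S_1$ and $S_2$ share eigenvectors and have eigenvalues $\nu_1\lambda$ and $\nu_2\lambda$, and by standard estimates for the discrete Steklov--Poincar\'e operator on a quasi-uniform mesh the spectrum satisfies $c\le \lambda\le C h^{-1}$ with $c,C$ independent of $h$ and of $\nu_1,\nu_2$. Writing $\epsilon=\nu_1/\nu_2$, the spectral radius of $R_4$ reduces to the scalar supremum of $|1-\theta(1-\alpha(\lambda)\beta(\lambda))|$, where $\alpha(\lambda)=(\gamma_1-\nu_2\lambda)/(\gamma_1+\nu_1\lambda)$ and $\beta(\lambda)=(\gamma_2-\nu_1\lambda)/(\gamma_2+\nu_2\lambda)$. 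My aim is to show $\alpha(\lambda)\approx 1$ and $\beta(\lambda)\approx -\epsilon$ uniformly in $\lambda$, after which the relaxation $\theta=2/(2+\epsilon)$ will kill the dominant term and leave only an $O(\epsilon)$ remainder.

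The first key identity is $\alpha(\lambda)-1=-(\nu_1+\nu_2)\lambda/(\gamma_1+\nu_1\lambda)$; combined with $\gamma_1\ge C_1\nu_2 h^{-1}$ and $\lambda\le C h^{-1}$ it yields $|\alpha(\lambda)-1|\le 2\nu_2\lambda/\gamma_1\le 2C/C_1$, which can be made at most $1/2$ by taking $C_1$ sufficiently large, so $|\alpha(\lambda)|$ is bounded independently of $h$ and of the coefficients. The dual identity is $\beta(\lambda)+\epsilon=(1+\epsilon)\gamma_2/(\gamma_2+\nu_2\lambda)$; combined with $\gamma_2\le c_0\nu_1$ and $\lambda\ge c$ it gives $|\beta(\lambda)+\epsilon|\le 2\gamma_2/(\nu_2\lambda)\le (2c_0/c)\,\epsilon$, that is, $\beta(\lambda)=-\epsilon+O(\epsilon)$ uniformly in $\lambda$.

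Combining these via $\alpha\beta=\beta+(\alpha-1)\beta$, using boundedness of $\alpha-1$ and smallness of $|\beta|\le C'\epsilon$, yields $1-\alpha(\lambda)\beta(\lambda)=1+\epsilon+r(\lambda)$ with $|r(\lambda)|\le C''\epsilon$. Plugging in $\theta=2/(2+\epsilon)$ and simplifying gives $1-\theta(1-\alpha\beta)=((2+\epsilon)-2(1+\epsilon+r))/(2+\epsilon)=(-\epsilon-2r)/(2+\epsilon)$, whose absolute value is bounded by $C\nu_1/\nu_2$ uniformly in $\lambda$. Taking the supremum over the spectrum of $S_i/\nu_i$ gives the claimed convergence rate.

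The main obstacle I expect is the uniformity of these bounds over the whole discrete spectrum. The constant $C_1$ must be chosen larger than the mesh-independent upper spectral bound $\lambda_{\max}\,h$, and $c_0$ must be smaller than the lower bound $\lambda_{\min}$; both of these spectral bounds come from $H^{1/2}(\Gamma)$-norm equivalences via the trace theorem and the Poincar\'e inequality, and they depend on the geometry of $\Omega_i$ and $\Gamma$ but not on $h$ or $\nu_i$. Once the dependencies on the Steklov--Poincar\'e spectral constants are recorded, the remainder of the argument is the purely algebraic manipulation of the two rational factors sketched above.
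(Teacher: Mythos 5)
Your proposal is correct and follows essentially the same route as the paper: both reduce $R_4$ to the scalar eigenvalue function $1-\theta\bigl(1-\alpha(\lambda)\beta(\lambda)\bigr)$ via the shared eigenvectors of $S_1,S_2$ in the symmetric case, invoke the spectral bounds $\lambda\in[c_0,C_1h^{-1}]$ together with the sign conditions $\gamma_1\ge \nu_2\lambda_{\max}$, $\gamma_2\le\nu_1\lambda_{\min}$, and then balance with $\theta=2/(2+\nu_1/\nu_2)$. The only (harmless) difference is in how the product is bounded: the paper computes $\omega'(\lambda)$ and evaluates $\omega=-\alpha\beta$ at its maximizer $\lambda_0=\sqrt{\gamma_1\gamma_2/(\nu_1\nu_2)}$ to get $\omega\in(0,\nu_1/\nu_2)$, whereas you bound the two rational factors separately by the identities $\alpha-1=-(\nu_1+\nu_2)\lambda/(\gamma_1+\nu_1\lambda)$ and $\beta+\epsilon=(1+\epsilon)\gamma_2/(\gamma_2+\nu_2\lambda)$ (and note that your enlargement of $C_1$ to force $|\alpha-1|\le 1/2$ is not even needed, since the sign condition already gives $0\le\alpha\le1$).
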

\begin{proof}
Let
\[\lambda(R_4) = 1-\theta(1+\omega(\lambda))
\]
where
\[ \omega(\lambda) = -\frac{\gamma_1-\nu_2\lambda}{\gamma_1+\nu_1\lambda}\cdot\frac{\gamma_2-\nu_1\lambda}{\gamma_2+\nu_2\lambda}.
\]
The derivation of $\omega(\lambda)$ is
\begin{equation}\label{derivation}
\omega'(\lambda) = \frac{(\gamma_1+\gamma_2)(\nu_1+\nu_2)(\gamma_1\gamma_2-\nu_1\nu_2\lambda^2)}{(\gamma_1+\nu_1\lambda)^2(\gamma_2+\nu_2\lambda)^2}.
\end{equation}
It is known \cite{chen2014optimal,xu2010spectral} that
\[
\lambda\in\left[c_0,C_1h^{-1}\right],
\]
then by choosing $0 < \gamma_2 \le c_0\nu_1$, $\gamma_1 \ge C_1\nu_2 h^{-1}$ and (\ref{derivation}), $\omega(\lambda)$ attains the maximum value at $\lambda_0 = \sqrt{\dfrac{\gamma_1\gamma_2}{\nu_1\nu_2}}$ and we have
\[
\omega(\lambda)\in(0,t^2(\dfrac{\eta-1/t}{\eta+t})^2]\subset(0,t^2),
\]
where $\eta = \sqrt{\dfrac{\gamma_1}{\gamma_2}}, t = \sqrt{\dfrac{\nu_1}{\nu_2}}$. Then the optimal choice of $\theta$ is obtained by
\[
1-\theta+1-\theta(1+\dfrac{\nu_1}{\nu_2}) = 0.
\]
That is $\theta = \theta_0 = \frac{2}{2+\nu_1/\nu_2}$, and the convergence rate is bounded by $1-\theta_0(1+\dfrac{\nu_1}{\nu_2}) < \dfrac{\nu_1}{2\nu_2}$.
\qed
\end{proof}\\
\indent
We may find that the Robin-Robin algorithm benefits from the jump of discontinuous coefficients in the symmetric case.
\subsection{Nonsymmetric case}
\begin{figure}[h]
\centering
\includegraphics[width=180mm]{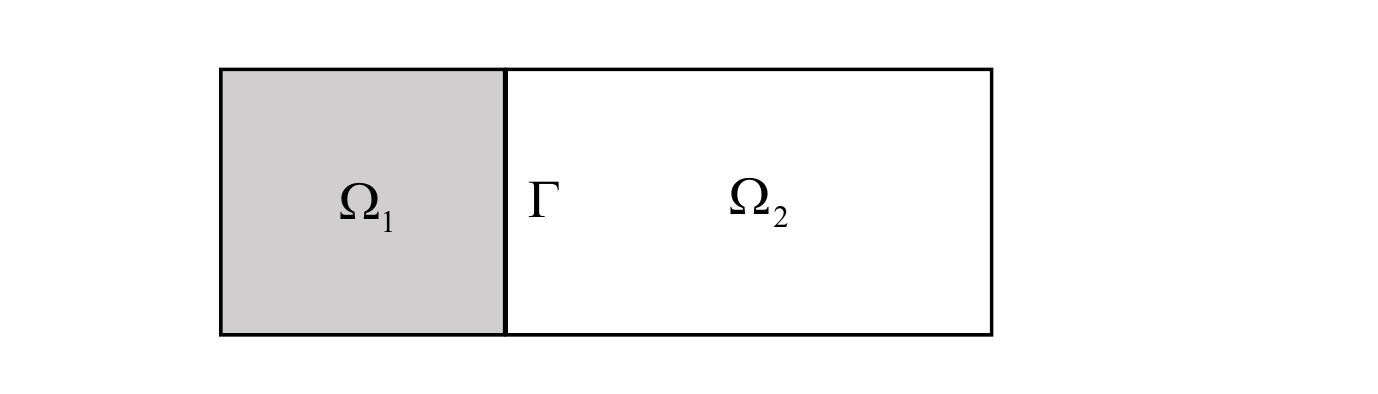}
\caption{$\Omega$ is divided into two nonsymmetric subdomains $\Omega_1,\Omega_2$ and their interface $\Gamma$.}
\label{redblack}
\end{figure}
In the nonsymmetric case, the equality (\ref{symmeq}) is no longer available. Instead, the next lemma is useful in the analysis.
\begin{lemma}
There exists positive constants $c_i, C_i$, independent of $h$, such that for any $v\in W_{\Gamma}$,
\begin{equation}\label{normequiv}
c_i\vert v\vert_{1,\Omega_i}^2\le \Vert v\Vert_{H_{00}^{1/2}(\Gamma)}^2\le C_i\vert v\vert_{1,\Omega_i}^2.
\end{equation}
\end{lemma}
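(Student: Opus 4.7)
The plan is to split (\ref{normequiv}) into two one-sided bounds and treat them by standard arguments from the discrete harmonic extension theory.

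\textbf{Upper bound.} Since $v$ vanishes on $\partial\Omega_i\setminus\Gamma$, the classical trace theorem applied to $H^1(\Omega_i)$ yields
$$
\|v|_{\Gamma}\|_{H_{00}^{1/2}(\Gamma)}^2\le C_i|v|_{1,\Omega_i}^2,
$$
with $C_i$ depending only on the geometry of $\Omega_i$. No discrete ingredient enters and the bound is automatically $h$-independent.

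\textbf{Lower bound.} Here I would invoke the minimizing property of the discrete harmonic extension: among all discrete functions in $W_i$ that vanish on $\partial\Omega_i\setminus\Gamma$ and coincide with $v|_{\Gamma}$ on $\Gamma$, the extension $\mathcal H_iv|_{\Gamma}$ is the one of smallest $H^1$ seminorm. Since the functions in $W_{\Gamma}$ are identified with their discrete harmonic extensions, $|v|_{1,\Omega_i}=|\mathcal H_i(v|_{\Gamma})|_{1,\Omega_i}$. Hence it suffices to exhibit a single discrete lifting $E_iw\in W_i$ of any trace $w\in V_{\Gamma}$, with $E_iw=0$ on $\partial\Omega_i\setminus\Gamma$, satisfying the stability estimate
$$
|E_iw|_{1,\Omega_i}\le C\|w\|_{H_{00}^{1/2}(\Gamma)},
$$
and then apply the minimization property. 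To construct $E_i$, I would first use the classical continuous lifting $\mathcal E\colon H_{00}^{1/2}(\Gamma)\to H^1(\Omega_i)$ that vanishes on $\partial\Omega_i\setminus\Gamma$ and satisfies $|\mathcal Ew|_{1,\Omega_i}\le C\|w\|_{H_{00}^{1/2}(\Gamma)}$, and then project onto $W_i$ via an $H^1$-stable quasi-interpolant (Scott--Zhang) that preserves the boundary values on $\partial\Omega_i$ and reproduces the nodal values on $\Gamma$. The quasi-uniformity of $\mathcal T_h$ ensures the $h$-independence of the stability constant.

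The hard part will be the endpoint behaviour of the $H_{00}^{1/2}(\Gamma)$-norm at the cross-points of $\Gamma\cap\partial\Omega$: this norm carries a weight that diverges near these points, and both the continuous lifting $\mathcal E$ and the quasi-interpolant must be constructed to respect this weight so that the lifting does not pick up a logarithmic loss in $h$. This is the content of a well-known extension result (see Toselli--Widlund, Chapter~4) and may simply be quoted; in a self-contained presentation one would verify it by localising near the endpoints and working in weighted Sobolev spaces, with $\mathcal E$ constructed as a reflection-type extension into a half-disc.
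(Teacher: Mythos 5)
The paper states this lemma without proof---it is the standard equivalence between the $H^{1/2}_{00}(\Gamma)$-norm of a trace and the energy of its discrete harmonic extension (Toselli--Widlund, Ch.~4)---so there is no in-paper argument to compare against. Your two-sided argument is the standard one and is sound: trace theorem plus Poincar\'e--Friedrichs (to replace the full $H^1$-norm by the seminorm, using that $v$ vanishes on $\partial\Omega_i\setminus\Gamma$) for the upper bound, and energy minimality of $\mathcal{H}_i$ combined with an $h$-uniformly $H^1$-stable discrete lifting for the lower bound, where your lifting (a continuous $H^{1/2}_{00}(\Gamma)\to H^1(\Omega_i)$ extension vanishing on $\partial\Omega_i\setminus\Gamma$, followed by a boundary-value-preserving Scott--Zhang interpolant) works because $v|_{\Gamma}$ is already piecewise linear on the induced boundary mesh, so the interpolant reproduces it exactly and its stability constant depends only on shape regularity. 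Your flagged concern about the endpoint weight is precisely why the $H^{1/2}_{00}$-norm rather than the $H^{1/2}$-norm appears here and why no logarithmic factor arises; quoting the extension theorem from Toselli--Widlund is the appropriate way to close that step.
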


\indent
By (\ref{normequiv}), we may get the equivalence between $S_1$ and $S_2$ while they no longer have the same eigenvector. Therefore, none of the algorithms is a direct solver. Actually, the algorithms could be divided into two groups according to their convergence behaviours. The first group contains D-N algorithm and R-R algorithm. Both of them could benefit from the jump of discontinuous coefficients. To be detailed, we have the following two results.
\begin{theorem}
In the discontinuous coefficients case, the convergence rate of D-N algorithm will be bounded by $\dfrac{\nu_1}{\nu_2}$ if $\nu_1\ll\nu_2$.
\end{theorem}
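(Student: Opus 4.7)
The plan is to analyze the spectrum of the error operator $R_1 = I - \theta S_2^{-1}(S_1+S_2)$ by reducing everything to the spectrum of the ratio $S_2^{-1}S_1$, and then to show that this ratio is small in the high-contrast regime using the $H_{00}^{1/2}(\Gamma)$ norm equivalence already recorded in the preceding lemma.

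First I would decouple the $\nu$-dependence from the geometric dependence. By definition of the Schur complement operators, for any $v_\Gamma \in V_\Gamma$,
\[
\langle S_i v_\Gamma, v_\Gamma\rangle \;=\; a_i(\mathcal{H}_i v_\Gamma,\mathcal{H}_i v_\Gamma) \;=\; \nu_i |\mathcal{H}_i v_\Gamma|_{1,\Omega_i}^2,
\]
and the discrete harmonic extension minimizes the $H^1$ seminorm among all extensions agreeing with $v_\Gamma$ on $\Gamma$. Combined with (\ref{normequiv}), this yields two-sided bounds $c_i\, \nu_i \|v_\Gamma\|_{H_{00}^{1/2}(\Gamma)}^2 \le \langle S_i v_\Gamma,v_\Gamma\rangle \le C_i\, \nu_i\|v_\Gamma\|_{H_{00}^{1/2}(\Gamma)}^2$, with constants depending only on the shapes of $\Omega_1$, $\Omega_2$.

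Second, I would translate this into an operator inequality. Writing $\widetilde{C} = C_1/c_2$ and $\widetilde{c} = c_1/C_2$, the bounds above give
\[
\widetilde{c}\,\frac{\nu_1}{\nu_2}\langle S_2 v_\Gamma, v_\Gamma\rangle \;\le\; \langle S_1 v_\Gamma, v_\Gamma\rangle \;\le\; \widetilde{C}\,\frac{\nu_1}{\nu_2}\langle S_2 v_\Gamma, v_\Gamma\rangle\qquad \forall v_\Gamma \in V_\Gamma.
\]
Since $S_2$ is SPD, $S_2^{-1}S_1$ is similar to $S_2^{-1/2}S_1 S_2^{-1/2}$, which is SPD, so its spectrum is real and sandwiched between $\widetilde{c}\,\nu_1/\nu_2$ and $\widetilde{C}\,\nu_1/\nu_2$. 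Consequently every eigenvalue of $S_2^{-1}(S_1+S_2)$ lies in $[1+\widetilde{c}\,\nu_1/\nu_2,\; 1+\widetilde{C}\,\nu_1/\nu_2]$.

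Finally I would optimize in $\theta$. The eigenvalues of $R_1$ are $1-\theta(1+\mu)$ with $\mu$ a point in the above interval, so choosing
\[
\theta \;=\; \frac{2}{2+(\widetilde{c}+\widetilde{C})\,\nu_1/\nu_2}
\]
balances the extremes and yields spectral radius $\rho(R_1) \le \frac{(\widetilde{C}-\widetilde{c})\nu_1/\nu_2}{2+(\widetilde{C}+\widetilde{c})\nu_1/\nu_2} \le C\,\nu_1/\nu_2$. Even the simpler choice $\theta=1$ works and gives the cleanest bound: the eigenvalues become $-\mu$, so $\rho(R_1) = \max \mu \le \widetilde{C}\,\nu_1/\nu_2$. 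I expect the main obstacle to be the careful bookkeeping in Step~1, namely proving the two-sided equivalence $\langle S_i v,v\rangle \sim \nu_i\|v\|_{H_{00}^{1/2}(\Gamma)}^2$, which requires invoking both the minimization property of $\mathcal{H}_i$ (for the upper bound on $S_i$ using any bounded extension) and a reverse trace/extension estimate (for the lower bound). Once this equivalence is in place, the rest is linear algebra and does not involve $h$, so the bound is independent of the mesh size, matching the theorem statement.
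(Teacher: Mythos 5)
Your proposal is correct and follows essentially the same route as the paper: both reduce the problem to bounding the generalized Rayleigh quotient of $S_2^{-1}(S_1+S_2)$ via the norm equivalence (\ref{normequiv}) and then pick the relaxation parameter $\theta$ to balance the spectral interval. The only (immaterial) difference is that you retain the sharper lower bound $1+\widetilde{c}\,\nu_1/\nu_2$ on the spectrum where the paper simply uses $\langle S_2u_\Gamma,u_\Gamma\rangle\le\langle Su_\Gamma,u_\Gamma\rangle$ and takes the lower endpoint to be $1$, leading to the slightly different optimal $\theta_0=\frac{2}{2+\frac{C_2}{c_1}\frac{\nu_1}{\nu_2}}$; both choices give a spectral radius of order $\nu_1/\nu_2$.
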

\begin{proof}
We know
\[
R_1 = 1-\theta S_2^{-1}S = 1-\theta S_2^{-1}(S_1+S_2),
\]
and we just need to find the spectrum of $S_2^{-1}S$.\\
\indent
By (\ref{normequiv}), for any $u_{\Gamma}\in W_{\Gamma}$,
\begin{equation}\label{dn1}
\lambda(S_2^{-1}S) = \frac{\langle Su_{\Gamma},u_{\Gamma}\rangle}{\langle S_2u_{\Gamma},u_{\Gamma}\rangle},
\end{equation}
\begin{equation}\label{dn2}
\langle S_2u_{\Gamma},u_{\Gamma}\rangle\le\langle Su_{\Gamma},u_{\Gamma}\rangle\le\langle (1+\frac{C_2}{c_1}\cdot\frac{\nu_1}{\nu_2})S_2u_{\Gamma},u_{\Gamma}\rangle.
\end{equation}
Combining (\ref{dn1}), (\ref{dn2}), we have
\[
\lambda(S_2^{-1}S)\subset [1,1+\frac{C_2}{c_1}\cdot\frac{\nu_1}{\nu_2}],
\]
and the optimal choice of $\theta$ is $\theta_0 = \frac{2}{2+\frac{C_2}{c_1}\cdot\frac{\nu_1}{\nu_2}}$. Then the convergence rate is bounded by $\dfrac{\nu_1}{\nu_2}$.
\qed
\end{proof}
\begin{theorem}
In the discontinuous coefficients case, if $\nu_1\ll\nu_2$, the convergence rate of the Robin-Robin algorithm will be bounded by $\dfrac{\nu_1}{\nu_2}$ with $\gamma_1 \ge C_0\nu_2 h^{-1}$, $0 < \gamma_2 \le c_0\nu_1$.
\end{theorem}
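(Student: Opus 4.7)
The error operator of Algorithm \ref{RRalgorithm} factors as $R_4 = I - \theta(I - AB)$, with $A = (\gamma_1 I - S_2)(\gamma_2 I + S_2)^{-1}$ and $B = (\gamma_2 I - S_1)(\gamma_1 I + S_1)^{-1}$, so my goal is to show $\|AB\|\le C\nu_1/\nu_2$ and then conclude $\rho(R_4)\le C\nu_1/\nu_2$ by an appropriate choice of $\theta$. Since $S_1$ and $S_2$ no longer share eigenvectors, the pointwise spectral analysis used in the symmetric case collapses, and I would instead combine an algebraic identity for $I-AB$ with the norm equivalence (\ref{normequiv}).

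Direct expansion gives the identity $(\gamma_2 I + S_2)(\gamma_1 I + S_1) - (\gamma_1 I - S_2)(\gamma_2 I - S_1) = (\gamma_1+\gamma_2)(S_1+S_2)$, and hence
\[
I - AB = (\gamma_1+\gamma_2)(\gamma_2 I + S_2)^{-1}(S_1+S_2)(\gamma_1 I + S_1)^{-1}.
\]
Splitting the middle factor $S_1+S_2$, the $S_2$-piece reduces, via the rewrites $(\gamma_2 I + S_2)^{-1}S_2 = I - \gamma_2(\gamma_2 I + S_2)^{-1}$ and $(\gamma_1+\gamma_2)(\gamma_1 I + S_1)^{-1} = I + B$, to $I + B$ plus a remainder whose operator norm is $O(\nu_1/\nu_2)$ under the hypotheses $\gamma_2\le c_0\nu_1$ and $\mathrm{spec}(S_2)\subset[\nu_2 c_0,\nu_2 C_1 h^{-1}]$. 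The $S_1$-piece is controlled using $\|S_1(\gamma_1 I + S_1)^{-1}\|\le (C_1/C_0)(\nu_1/\nu_2)$, which follows from $\gamma_1\ge C_0\nu_2 h^{-1}$ dominating $\mathrm{spec}(S_1)\subset[\nu_1 c_0,\nu_1 C_1 h^{-1}]$, together with the operator form $S_1\le (C_2/c_1)(\nu_1/\nu_2)S_2$ of (\ref{normequiv}). Collecting these estimates gives $\|AB\|\le C\nu_1/\nu_2$, and choosing $\theta=1$ (or an optimally shifted value) then yields $\rho(R_4)\le C\nu_1/\nu_2$.

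The main technical obstacle is the operator-norm estimate of the cross term $(\gamma_1+\gamma_2)(\gamma_2 I + S_2)^{-1}S_1(\gamma_1 I + S_1)^{-1}$: a naive submultiplicative bound yields $O(\nu_1/(\nu_2 h))$, because $(\gamma_2 I + S_2)^{-1}$ is large precisely on low-frequency modes of $S_2$, where $(\gamma_1+\gamma_2)$ is also large. The algebraic identity above is precisely what places $S_1$ between the two resolvents, allowing (\ref{normequiv}) to couple the small eigenvalues of $S_2$ with the small energies of $S_1 v$; this coupling is what removes the spurious $h^{-1}$ and produces the clean factor $\nu_1/\nu_2$.
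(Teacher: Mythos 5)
Your reduction to the identity
\[
I-AB=(\gamma_1+\gamma_2)(\gamma_2 I+S_2)^{-1}(S_1+S_2)(\gamma_1 I+S_1)^{-1}
\]
is algebraically correct, and the $S_2$-piece and the bound $\Vert B\Vert\le C\nu_1/\nu_2$ do go through. But the proof has a genuine gap exactly at the point you flag as ``the main technical obstacle'': you never actually bound the cross term $T=(\gamma_1+\gamma_2)(\gamma_2 I+S_2)^{-1}S_1(\gamma_1 I+S_1)^{-1}$ by $C\nu_1/\nu_2$ in operator norm; you only assert that the placement of $S_1$ between the resolvents ``removes the spurious $h^{-1}$.'' The difficulty is that $S_1$ and $S_2$ do not commute in the nonsymmetric case, and the form inequality $S_1\le C(\nu_1/\nu_2)S_2$ from (\ref{normequiv}) controls $\langle S_1v,v\rangle$ but not $\Vert S_1v\Vert$. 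Concretely, for $v$ in a low-energy direction of $S_2$ one has $\langle S_1v,v\rangle\le C\nu_1\Vert v\Vert^2$, yet $\Vert S_1v\Vert^2\le\lambda_{\max}(S_1)\langle S_1v,v\rangle$ can still be of size $\nu_1 h^{-1}\cdot\nu_1\Vert v\Vert^2$; feeding this through $(\gamma_2 I+S_2)^{-1}$ (of size $\nu_2^{-1}$ on such $v$) and $(\gamma_1+\gamma_2)\approx\nu_2h^{-1}$ leaves a factor $h^{-1/2}$ that the splitting $S_1=S_1^{1/2}S_1^{1/2}$ does not remove. So the clean operator-norm bound $\Vert T\Vert\le C\nu_1/\nu_2$ is not established by the tools you invoke, and it is the whole content of the theorem.

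The paper avoids this issue entirely by never estimating a non-symmetric three-factor product in operator norm. It writes $R_4=I-\theta P_4^{-1}G$ with $P_4^{-1}=A$ and $G=A^{-1}-B$ (both symmetric, since each is a function of a single $S_i$), and proves the two-sided quadratic-form bounds $\langle P_4g,g\rangle\le\langle Gg,g\rangle\le(1+C\nu_1/\nu_2)\langle P_4g,g\rangle$. Both forms decompose into $\vert g\vert_{S_1}^2$, $\vert g\vert_{S_2}^2$ and $\Vert g\Vert_{0,\Gamma}^2$ evaluated at the \emph{same} $g$, so (\ref{normequiv}), the inverse estimate $\vert g\vert_{S_i}^2\le C\nu_ih^{-1}\Vert g\Vert_{0,\Gamma}^2$ (for the upper bound, with $\gamma_1\ge C\nu_2h^{-1}$) and the trace/Poincar\'e estimate $\Vert g\Vert_{0,\Gamma}^2\le C\nu_i^{-1}\vert g\vert_{S_i}^2$ (for the lower bound, with $\gamma_2\le c_0\nu_1$) apply term by term; the generalized Rayleigh quotient then pins $\lambda(P_4^{-1}G)$ in $[1,1+C\nu_1/\nu_2]$ and $\theta=2/(2+C\nu_1/\nu_2)$ gives the stated rate. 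If you want to salvage your route, you would need either to prove the operator-norm bound on $T$ by a separate argument or, more simply, to symmetrize as the paper does and work with Rayleigh quotients of $G$ against $P_4$.
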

\begin{proof}
We have
\[
R_4 = I-\theta P_4^{-1}G,
\]
where
\[
P_4^{-1} = (\gamma_1I-S_2)(\gamma_2I+S_2)^{-1},
\]
\[
G = (\gamma_2I+S_2)(\gamma_1I-S_2)^{-1}-(\gamma_2I-S_1)(\gamma_1I+S_1)^{-1}.
\]
For any $g\in W_{\Gamma}$, it holds that\cite{liu2014robin}
\begin{equation}\label{rr1}
\frac{\gamma_2}{\gamma_1}\Vert g\Vert_{0,\Gamma}^2+\frac{\gamma_1+\gamma_2}{\gamma_1^2}\vert g\vert_{S_2}^2\le\langle P_4g,g\rangle\le\frac{\gamma_2}{\gamma_1}\Vert g\Vert_{0,\Gamma}^2+\frac{\gamma_1+\gamma_2}{\gamma_1^2}\vert g\vert_{S_2}^2,
\end{equation}
\begin{equation}\label{rr2}
\frac{\gamma_1+\gamma_2}{(2+\delta)\gamma_1^2}\vert g\vert_{S_1}^2+\frac{\gamma_1+\gamma_2}{\gamma_1^2}\vert g\vert_{S_2}^2\le \langle Gg,g\rangle\le\frac{\gamma_1+\gamma_2}{\gamma_1^2}\vert g\vert_{S_1}^2+\frac{2(\gamma_1+\gamma_2)}{\gamma_1^2}\vert g\vert_{S_2}^2,
\end{equation}
where $\delta$ is an arbitrary positive constant independent of $h$, and $\vert\cdot\vert_{S_i}^2 = \langle S_i\cdot,\cdot\rangle, i =1,2$.
Additionally, we have
\begin{equation}\label{rr4}
\vert g\vert_{S_i}^2\le C\nu_ih^{-1}\Vert g\Vert_{0,\Gamma}^2.
\end{equation}
\indent
By (\ref{rr1}), (\ref{rr2}) and (\ref{rr4}), we have the upper bound estimate, i.e.
\begin{align*}
\langle Gg,g\rangle&\le\frac{\gamma_1+\gamma_2}{\gamma_1^2}\vert g\vert_{S_1}^2+\frac{2(\gamma_1+\gamma_2)}{\gamma_1^2}\vert g\vert_{S_2}^2\\
 &\le \frac{C'(\gamma_1+\gamma_2)}{\gamma_1^2}(1+C\frac{\nu_1}{\nu_2})\vert g\vert_{S_2}^2\\
 &\le (1+C\frac{\nu_1}{\nu_2})\langle P_4g,g\rangle
\end{align*}
with a suitable $\gamma_1\ge C\nu_2 h^{-1}$.\\
\indent
By trace theorem and Poincar$\acute{\text{e}}$ inequality, we have
\begin{equation}\label{rr3}
\Vert g\Vert_{0,\Gamma}^2\le C\vert \mathcal{H}_ig\vert_{1,\Omega_i}^2\le \frac{C}{\nu_i}\vert g\vert_{S_i}^2.
\end{equation}
Then it follows (\ref{rr1}), (\ref{rr2}), (\ref{rr3}) that
\begin{align*}
\langle P_4g,g\rangle &\le \frac{\gamma_2}{\gamma_1}\Vert g\Vert_{0,\Gamma}^2+\frac{\gamma_1+\gamma_2}{\gamma_1^2}\vert g\vert_{S_2}^2\\
 &\le \frac{c_0C}{\gamma_1}\vert g\vert_{S_1}^2+\frac{\gamma_1+\gamma_2}{\gamma_1^2}\vert g\vert_{S_2}^2\\
 &\le \langle Gg,g\rangle
\end{align*}
with a suitable $\gamma_2\le c_0\nu_1$.\\
\indent
Therefore,
\begin{equation}
1\le \lambda(P_4^{-1}G)\le (1+C\frac{\nu_1}{\nu_2}).
\end{equation}
Here $\theta$ is selected to be $\frac{2}{2+C\frac{\nu_1}{\nu_2}}$, and the convergence rate is bounded by $\dfrac{\nu_1}{\nu_2}$.
\qed
\end{proof}
\begin{remark}
In the two theorems above, we assume $\nu_1\ll\nu_2$ and get convergence rates bounded by $\dfrac{\nu_1}{\nu_2}$. If $\nu_2\ll\nu_1$, we could start the iterations of D-N algorithm and R-R algorithm by computing the problem with Dirichlet boundary condition and Robin boundary condition in $\Omega_2$. Then the convergence rates will be bounded by $\dfrac{\nu_2}{\nu_1}$. While, for the case $\nu_1 = \nu_2$, the convergence rates of D-N algorithm and R-R algorithms are bounded by a constant which is independent of $h$ and less than 1 strictly.
\end{remark}

\indent
Unlike the first two algorithms, the N-N algorithm and D-D algorithm could not take advantage of the high contrast coefficients and their common feature is that the convergence rate may be independent of the jump of coefficients with suitable weights.
\begin{theorem}
The convergence rate of the N-N algorithm and D-D algorithm may be independent of $\nu_1, \nu_2$ by choosing suitable $\delta_1^{\dag}, \delta_2^{\dag}$.
\end{theorem}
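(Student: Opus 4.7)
The plan is to write each error operator as $R_j = I-\theta M_j$ (with $M_2 = (\delta_1^{\dag 2}S_1^{-1}+\delta_2^{\dag 2}S_2^{-1})(S_1+S_2)$ for N-N and $M_3 = (\delta_1^{\dag 2}S_1+\delta_2^{\dag 2}S_2)(S_1^{-1}+S_2^{-1})$ for D-D) and to show that, for an appropriate $\nu$-dependent choice of $\delta_1^{\dag},\delta_2^{\dag}$, the spectrum of $M_j$ lies in a fixed interval $[c,C]\subset(0,\infty)$ whose endpoints depend only on the constants $c_i,C_i$ of \eqref{normequiv}. A Chebyshev-type choice $\theta = 2/(c+C)$ then bounds the spectral radius of $R_j$ by $(C-c)/(C+c)<1$, uniformly in $\nu_1,\nu_2$.

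The first step is to verify that $M_2$ is self-adjoint in the $(S_1+S_2)$-inner product (and $M_3$ in the $(S_1^{-1}+S_2^{-1})$-inner product), since $(S_1+S_2)M_2 = (S_1+S_2)(\delta_1^{\dag 2}S_1^{-1}+\delta_2^{\dag 2}S_2^{-1})(S_1+S_2)$ is manifestly symmetric. The eigenvalues are therefore real, and after the substitution $v=(S_1+S_2)u$ they take the Rayleigh-quotient form
\[
\lambda(M_2) = \frac{\delta_1^{\dag 2}\langle S_1^{-1}v,v\rangle + \delta_2^{\dag 2}\langle S_2^{-1}v,v\rangle}{\langle(S_1+S_2)^{-1}v,v\rangle},
\]
with the dual expression (with $S_i$ and $S_i^{-1}$ interchanged and denominator $\langle(S_1^{-1}+S_2^{-1})^{-1}v,v\rangle$) governing $M_3$.

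Next I would promote \eqref{normequiv} to the operator inequalities $\nu_i C_i^{-1} S_0 \le S_i \le \nu_i c_i^{-1} S_0$, where $S_0$ is the operator whose quadratic form is $\|\cdot\|_{H_{00}^{1/2}(\Gamma)}^2$; inverting and summing yields matching two-sided bounds for $S_i^{-1}$, $S_1+S_2$ and their inverses. Inserting these into the Rayleigh quotient collapses the estimate to the scalar inequality
\[
\lambda(M_2) \in \bigl[\,\kappa^{-1}\,r(\delta^{\dag};\nu),\ \kappa\,r(\delta^{\dag};\nu)\,\bigr], \qquad r(\delta^{\dag};\nu) := \Bigl(\tfrac{\delta_1^{\dag 2}}{\nu_1}+\tfrac{\delta_2^{\dag 2}}{\nu_2}\Bigr)(\nu_1+\nu_2),
\]
where $\kappa$ depends only on the ratios $C_i/c_i$ of \eqref{normequiv}. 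Minimising $r$ under the constraint $\delta_1^{\dag}+\delta_2^{\dag}=1$ gives $\delta_i^{\dag} = \nu_i/(\nu_1+\nu_2)$, which pins $r$ at $1$ and hence confines $\lambda(M_2)$ to $[\kappa^{-1},\kappa]$ independently of $\nu_1,\nu_2$. The D-D analogue produces the scalar factor $r'(\delta^{\dag};\nu) = (\delta_1^{\dag 2}\nu_1+\delta_2^{\dag 2}\nu_2)(\nu_1+\nu_2)/(\nu_1\nu_2)$, which is minimised to $1$ by the swapped choice $\delta_1^{\dag}=\nu_2/(\nu_1+\nu_2)$, $\delta_2^{\dag}=\nu_1/(\nu_1+\nu_2)$. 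In either case the bracket depends only on $c_i,C_i$.

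The main technical obstacle is the lower bound on the inverse-type quantities $\langle(S_1+S_2)^{-1}v,v\rangle$ and $\langle(S_1^{-1}+S_2^{-1})^{-1}v,v\rangle$: while $\langle S_i u,u\rangle$ is pinned directly by \eqref{normequiv} applied to the discrete harmonic extension, the inverse quantities must be handled either by duality through the $H_{00}^{1/2}(\Gamma)^{*}$ norm or by a spectral argument exploiting the operator inequalities derived in the previous step. Once this piece is in place, all remaining inequalities are scalar, elementary, and independent of $h$, and the uniform convergence rate follows.
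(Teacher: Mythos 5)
Your proposal is correct and follows essentially the same route as the paper: reduce the error operator to a generalized Rayleigh quotient, use the norm equivalence \eqref{normequiv} to bound $\langle S_1u,u\rangle/\langle S_2u,u\rangle$ by $C\,\nu_1/\nu_2$ up to constants, and then pick coefficient-dependent weights so that the remaining scalar factor is independent of $\nu_1,\nu_2$. The only difference is your choice $\delta_i^{\dag}=\nu_i/(\nu_1+\nu_2)$ versus the paper's $\delta_i^{\dag}=\sqrt{\nu_i}/(\sqrt{\nu_1}+\sqrt{\nu_2})$; both confine the spectrum to an interval depending only on the constants in \eqref{normequiv}, and your explicit treatment of the inverse operators via operator monotonicity fills in a step the paper leaves implicit.
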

\begin{proof}
We have
\[
R_2 = I-\theta P_2^{-1}S = I-\theta (D_1S_1^{-1}D_1+D_2S_2^{-1}D_2)(S_1+S_2).
\]
By (\ref{normequiv}), for any $u_{\Gamma}\in W_{\Gamma}$, it holds that
\begin{equation}\label{nn1}
\lambda(P_2^{-1}S) = \frac{\langle Su_{\Gamma},u_{\Gamma}\rangle}{\langle P_2u_{\Gamma},u_{\Gamma}\rangle},
\end{equation}
\begin{equation}\label{nn2}
\frac{c_1}{C_2}\cdot\frac{\nu_2}{\nu_1}\le \frac{\langle S_2u_{\Gamma},u_{\Gamma}\rangle}{\langle S_1u_{\Gamma},u_{\Gamma}\rangle}\le \frac{C_1}{c_2}\cdot\frac{\nu_2}{\nu_1},
\end{equation}
and
\begin{equation}\label{nn3}
\frac{c_2}{C_1}\cdot\frac{\nu_1}{\nu_2}\le \frac{\langle S_1u_{\Gamma},u_{\Gamma}\rangle}{\langle S_2u_{\Gamma},u_{\Gamma}\rangle}\le \frac{C_2}{c_1}\cdot\frac{\nu_1}{\nu_2},
\end{equation}
Combining (\ref{nn1}), (\ref{nn2}), (\ref{nn3}), we have
\begin{equation}\label{rangenn}
(\delta_1^{\dag})^2(1+\frac{c_1}{C_2}\cdot\frac{\nu_2}{\nu_1})+(\delta_2^{\dag})^2(1+\frac{c_2}{C_1}\cdot\frac{\nu_1}{\nu_2})\le \lambda(P_2^{-1}S)\le (\delta_1^{\dag})^2(1+\frac{C_1}{c_2}\cdot\frac{\nu_2}{\nu_1})+(\delta_2^{\dag})^2(1+\frac{C_2}{c_1}\cdot\frac{\nu_1}{\nu_2}).
\end{equation}
We denote the left and right sides of inequality (\ref{rangenn}) by $\lambda_{min}$ and $\lambda_{max}$, then by choosing $\theta = \theta_0 = \frac{2}{\lambda_{min}+\lambda_{max}}$, we have
\[
\lambda(R_2)\subset (-\frac{\lambda_{max}-\lambda_{min}}{\lambda_{max}+\lambda_{min}},\frac{\lambda_{max}-\lambda_{min}}{\lambda_{max}+\lambda_{min}}) = (-\frac{\kappa-1}{\kappa+1},\frac{\kappa-1}{\kappa+1}).
\]
We now analyze how $\kappa = \kappa(\delta_1^{\dag})$ changes with $\delta_1^{\dag}$, where
\[
\kappa(\delta_1^{\dag}) = \frac{\lambda_{max}}{\lambda_{min}} = \frac{(\delta_1^{\dag})^2(1+\frac{c_1}{C_2}\cdot\frac{\nu_2}{\nu_1})+(\delta_2^{\dag})^2(1+\frac{c_2}{C_1}\cdot\frac{\nu_1}{\nu_2})}{(\delta_1^{\dag})^2(1+\frac{C_1}{c_2}\cdot\frac{\nu_2}{\nu_1})+(\delta_2^{\dag})^2(1+\frac{C_2}{c_1}\cdot\frac{\nu_1}{\nu_2})}.
\]
The derivation of $\kappa(\delta_1^{\dag})$ is
\[
\kappa'(\delta_1^{\dag}) = \frac{\left(\frac{\nu_2}{\nu_1}(\frac{C_1}{c_2}-\frac{C_2}{c_1})-\frac{\nu_1}{\nu_2}(\frac{c_1}{C_2}-\frac{c_2}{C_1})\right)\delta_1^{\dag}(1-\delta_1^{\dag})}{\left((\delta_1^{\dag})^2(1+\frac{c_1}{C_2}\cdot\frac{\nu_2}{\nu_1})+(1-\delta_1^{\dag})^2(1+\frac{c_2}{C_1}\cdot\frac{\nu_1}{\nu_2})\right)^2},
\]
then we know the minimum value of $\kappa(\delta_1^{\dag})$ is attained at $\delta_1^{\dag} = 0$ or $\delta_1^{\dag} = 1$ according to the symbol of the coefficient. However, in both the two cases, the N-N algorithm deteriorates to D-N algorithm. Although, we can choose a $\delta_1^{\dag}$ near $1$ or $0$ and it seems that the convergence rate benefits from the discontinuous coefficients, we take it as an asymptotic convergence behaviour and we prefer D-N algorithm. In fact, we may choose
\[\delta_1^{\dag} = \frac{\sqrt{\nu_1}}{\sqrt{\nu_1}+\sqrt{\nu_2}},\quad\delta_2^{\dag} = \frac{\sqrt{\nu_2}}{\sqrt{\nu_1}+\sqrt{\nu_2}},\]
so that
\[
\kappa\le\frac{\max\{\frac{C_1}{c_2},\frac{C_2}{c_1}\}+1}{\min\{\frac{c_2}{C_1},\frac{c_1}{C_2}\}+1},
\]
which is independent of $\nu_1$ and $\nu_2$.\\
\indent
The case of D-D algorithm is similar.
\qed
\end{proof}

\begin{remark}
From the analysis above, we find that the jump of discontinuous coefficients could accelerate the iteration when using the D-N algorithm and the R-R algorithm in the case of two subdomains as the ratio of the smaller coefficient to the larger one ($\nu_1/\nu_2$ when $\nu_1<\nu_2$) dominate their convergence rates. By contrast, the N-N algorithm and the D-D algorithm could not benefit from the ratio because the norms of $\Omega_1, \Omega_2$ need to be controlled by each other.
\end{remark}

\section{The case of many subdomains}
In this section, we further consider how the discontinuous coefficients influence the convergence behaviours of these domain decomposition methods and wether the previous properties still hold in the case of many subdomains.
\begin{figure}[h]
\centering
\includegraphics[width=150mm]{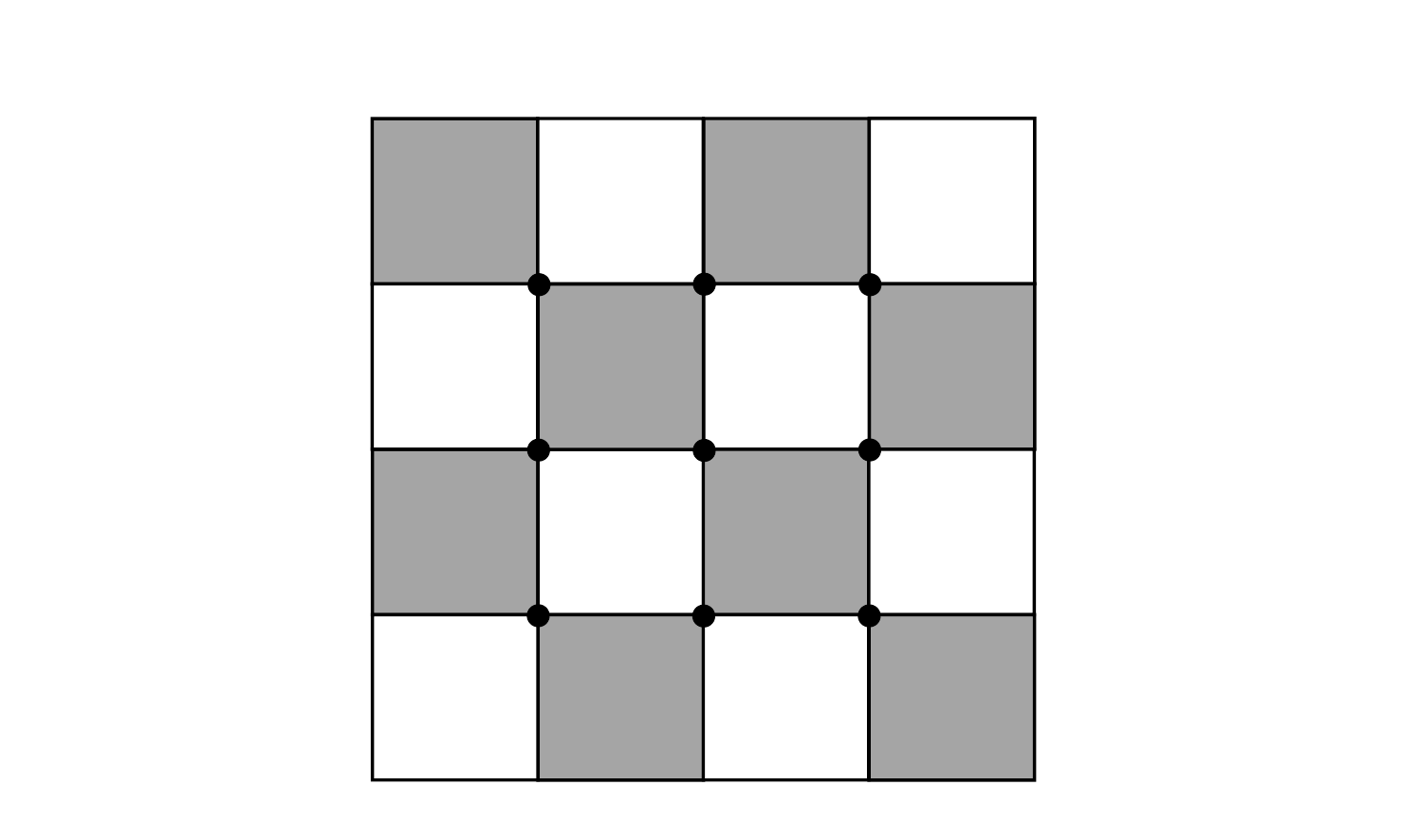}
\caption{The red-black partition of $\Omega$ into $4\times 4$ subdomains. The grey blocks denote the black domains and the white blocks denote the red domains. The black dots are cross points}
\label{redblack}
\end{figure}
\subsection{Preconditioned systems}
The algorithms in the case of many subdomains in this paper rely on a red-black partition, so we first introduce the geometric settings. Partition the domain $\Omega$ into two classes of nonoverlapping subdomains $\Omega_R, \Omega_B$. The red domain is denoted by $\Omega_R = \bigcup \Omega_i, i\in\Lambda_R$ and the black domain is denoted by $\Omega_B = \bigcup\Omega_j, j\in\Lambda_B$, where $\Lambda_R,\Lambda_B$ are the sets of subscripts of subdomains which belong to class $\Omega_R, \Omega_B$, respectively. The size of subdomains is $H$. The intersection of subdomains in the same class is either empty or vertex. The interface is $\Gamma := \partial\Omega_R\cap\partial\Omega_B$. The vertexes of subdomains, which do not belong to $\partial\Omega$, are called the cross points. $\mathcal{T}_h$ is the triangulation same as the case of two subdomains. We assume that the subdomains boundaries do not cut through any element in $\mathcal{T}_h$.\\
\indent
Based on the geometric settings, some function spaces are defined. Let $W\in H_0^1(\Omega)$ be the P1 conforming finite element space. Then let $W_k,k\in\Lambda_R\cap\Lambda_B, W_R, W_B$ be the spaces which include the functions of $W$ restricted to $\overline{\Omega}_k, \overline{\Omega}_R, \overline{\Omega}_B$. $W_k^0, W^0_R, W^0_B$ are the subspaces of $W_k, W_R, W_B$ that functions of $W_k^0, W^0_R, W^0_B$ have vanishing traces on $\partial\Omega_k, \partial\Omega_R, \partial\Omega_B$, respectively. The space on the interface $\Gamma$ is defined to be $V_{\Gamma} = W|_{\Gamma}$. We also denote $V_k := W|_{\partial\Omega_k}$. Besides, $V_{\Delta}$ contains functions in $V_{\Gamma}$ who vanish at each node on cross points.\\
\indent
We then introduce some bilinear forms and operators in the case of many subdomains. Define the bilinear form on the subdomain $\Omega_k, k\in\Lambda_R\cup\Lambda_B$ by
\[
a_k(u_k,v_k) =\int_{\Omega_k}\nu_k\nabla u_k\cdot\nabla v_k\quad\forall u_k,v_k\in W_k.
\]
\indent
Define local discrete harmonic extension operator $\mathcal{H}_k: V_k\rightarrow W_k$ as follows:
\begin{equation*}
\begin{cases}
\begin{array}{rll}
a_k(\mathcal{H}_ku_{\Gamma},v_k) &= 0&\quad\forall v_k\in W_k^0,\\
\mathcal{H}_ku_{\Gamma} &= u_{\Gamma}&\quad{\rm on\ }\Gamma,\\
\end{array}
\end{cases}
\end{equation*}
where $u_{\Gamma}\in V_k$. Here we also note that the constant coefficient $\nu_k$ does not affect the result $\mathcal{H}_ku_{\Gamma}$. The local Schur complement operator $S_k:V_k\rightarrow V_k$ is defined as follows:
\[
\langle S_ku_k, v_k\rangle = a_k(\mathcal{H}_ku_k,T_kv_k)\quad\forall v_k\in V_k,
\]
where $T_k$ is an arbitrary extension operators. We may see that $S_k$ is a symmetric and positive semi-definite operator, therefore it may induce a semi-norm of $V_k$, i.e. $\vert\cdot\vert_{S_k}^2 := \langle S_k\cdot,\cdot\rangle$(if $\partial\Omega_k\cap\partial\Omega\neq\phi$, $S_k$ will be positive definite and it induces a norm).\\
\indent
Based on the local bilinear forms and operators, we could define global operators. Define the bilinear form on $\Omega_R$ by
\begin{equation*}
a_R(u_R,v_R) = \sum_{k\in\Lambda_R}\int_{\Omega_k}\nabla u_R\cdot\nabla v_R\quad\forall u_R,v_R\in W_R.
\end{equation*}
Define discrete harmonic extension operator $\mathcal{H}_R: V_{\Gamma}\rightarrow W_R$ as follows:
\begin{equation*}
\begin{cases}
\begin{array}{rll}
a_R(\mathcal{H}_Ru_{\Gamma},v_R) &= 0&\quad\forall v_R\in W_R^0,\\
\mathcal{H}_Ru_{\Gamma} &= u_{\Gamma}&\quad{\rm on\ }\Gamma.\\
\end{array}
\end{cases}
\end{equation*}
Then the Schur complement operator $S_R:V_{\Gamma}\rightarrow V_{\Gamma}$ could be defined, i.e.
\begin{equation*}
\langle S_Ru_{\Gamma},v_{\Gamma}\rangle = a_R(\mathcal{H}_Ru_{\Gamma},T_Rv_{\Gamma})\quad\forall v_{\Gamma}\in V_{\Gamma},
\end{equation*}
where $T_R:V_{\Gamma}\rightarrow W_R$ is an arbitrary extension operator. From the geometric settings, we may see that $\Omega_R$ consists of $\Omega_k, k\in\Lambda_R$ and they are connected by cross points. Therefore, $S_R$ is a symmetric and positive definite operator and it induces a norm of $V_{\Gamma}$, i.e. $\vert\cdot\vert_{S_R}^2 = \langle S_R\cdot,\cdot\rangle$. Similarly, we may define $a_B(\cdot,\cdot), \mathcal{H}_B$ and $S_B$. The Schur complement operator of the whole subdomains is defined as the sum of $S_R$ and $S_B$, that is, $S = S_R+S_B$.\\
\indent
At last, we give the definitions of Schur complement operators of $V_{\Delta}$. Define $\widetilde{S}: V_{\Delta}\rightarrow V_{\Delta}$ as follows:
\[
\langle \widetilde{S}u_{\Delta},u_{\Delta}\rangle = \min_{u\in V_{\Gamma},u|_{\Gamma_{\Delta}} = u_{\Delta}}\langle Su,u\rangle,
\]
where $\Gamma_{\Delta}$ denotes the degrees of freedom on $\Gamma$ except cross points. From the minimization property and the fact that $S$ is symmetric and positive definite, we know that $\widetilde{S}$ is also a symmetric and positive definite operator and it induces a norm of $V_{\Delta}$. Similarly, $\widetilde{S}_R, \widetilde{S}_B$ could be defined and they hold the same properties as $\widetilde{S}$.\\
\indent
Now we are in a position to introduce algorithms in the case of many subdomains. We use the following Schur complement system in this paper:
\[
\widetilde{S}u_{\Gamma} = \tilde{f}_{\Delta}.
\]
The D-N algorithm and N-N algorithm could provide preconditioners for this system, which are $P_{DN}^{-1} = \widetilde{S}_B^{-1}$ and $P_{NN}^{-1} = D_{R}\widetilde{S}_R^{-1}D_{R}+D_{B}\widetilde{S}_B^{-1}D_{B}$, respectively. Here, $D_{R}, D_{B}$ are scaling operators.\\
\indent
The system of flux is
\[
F\lambda = d,
\]where $F = S_R^{-1}+S_B^{-1}, d = S_B^{-1}f_B-S_R^{-1}f_R$. For the flux system, the D-D algorithm provide a preconditioner $P_{DD}^{-1} = D_{R}S_RD_{R}+D_{B}S_BD_{B}$.
\\
\indent
Similar to the case of two subdomains, the system of Robin boundary data $g_R$ is
\[
Gg_R = ((\gamma_BI+S_B)(\gamma_RI-S_B)^{-1}-(\gamma_BI-S_R)(\gamma_RI+S_R)^{-1})g_R = f^{\star}
\] and the R-R preconditioner is $P_{RR}^{-1} = (\gamma_RI-S_B)(\gamma_BI+S_B)^{-1}$.\\
 \indent
The right hand sides $\tilde{f}_{\Delta},d,f^{\star}$ will be illustrated in detail in the last subsection.
\subsection{Condition number estimate}
In this subsection, we analyze the condition numbers of these preconditioned systems. For simplicity, we consider the red-black checkerboard case, i.e.
\begin{equation*}
\nu(\mathbf{x}) =
\begin{cases}
\nu_R\quad \forall\mathbf{x}\in\Omega_R,\\
\nu_B\quad \forall\mathbf{x}\in\Omega_B,\\
\end{cases}
\end{equation*}
then the scaling operators $D_{R}$ and $D_{B}$ will be $\delta_R^{\dag}I$ and $\delta_B^{\dag}I$, respectively. We also assume that $\nu_R<\nu_B$.\\
\indent
As to D-N algorithms and R-R algorithm, we have following conclusions.
\begin{theorem}
For the D-N algorithm, we have
\begin{equation}\label{thmDN2}
\langle\widetilde{S}_Bu_{\Delta},u_{\Delta}\rangle\le\langle\widetilde{S}u_{\Delta},u_{\Delta}\rangle\le \left(1+C\frac{\nu_R}{\nu_B}(1+\log{\frac Hh})^2\right)\langle\widetilde{S}_Bu_{\Delta},u_{\Delta}\rangle\quad \forall u_{\Delta}\in V_{\Delta}.
\end{equation}
Therefore, the condition number of D-N algorithm is bounded as follows:
\begin{equation}\label{thmDN4}
\kappa(P_{DN}^{-1}\widetilde{S})\le 1+C\frac{\nu_R}{\nu_B}(1+\log{\frac Hh})^2.
\end{equation}
\end{theorem}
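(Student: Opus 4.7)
My strategy is to prove the spectral equivalence (\ref{thmDN2}) by exploiting the variational (partial-minimum) characterization of $\widetilde{S}$ and $\widetilde{S}_B$; the condition number bound (\ref{thmDN4}) then falls out of the Rayleigh--Ritz principle. The lower bound is immediate: for any $u\in V_\Gamma$ with $u|_{\Gamma_\Delta}=u_\Delta$, positivity of $S_R$ gives $\langle Su,u\rangle=\langle S_Ru,u\rangle+\langle S_Bu,u\rangle\ge\langle S_Bu,u\rangle$, and minimizing both sides over the same admissible set yields $\langle\widetilde{S}u_\Delta,u_\Delta\rangle\ge\langle\widetilde{S}_Bu_\Delta,u_\Delta\rangle$.

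For the upper bound I let $v^{*}\in V_\Gamma$ be the unique minimizer realizing $\widetilde{S}_B$, so $v^{*}|_{\Gamma_\Delta}=u_\Delta$ and $\langle S_Bv^{*},v^{*}\rangle=\langle\widetilde{S}_Bu_\Delta,u_\Delta\rangle$. Testing $v^{*}$ against the minimization defining $\widetilde{S}$ gives
\[
\langle\widetilde{S}u_\Delta,u_\Delta\rangle\le\langle Sv^{*},v^{*}\rangle=\langle S_Rv^{*},v^{*}\rangle+\langle\widetilde{S}_Bu_\Delta,u_\Delta\rangle,
\]
so the whole task reduces to the coefficient-sensitive estimate $\langle S_Rv^{*},v^{*}\rangle\le C\,(\nu_R/\nu_B)(1+\log(H/h))^2\,\langle S_Bv^{*},v^{*}\rangle$. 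Using $\langle S_Rv,v\rangle=\nu_R|\mathcal{H}_Rv|_{1,\Omega_R}^{2}$ and $\langle S_Bv,v\rangle=\nu_B|\mathcal{H}_Bv|_{1,\Omega_B}^{2}$ and cancelling $\nu_R$, this is equivalent to the coefficient-free comparison
\[
|\mathcal{H}_Rv^{*}|_{1,\Omega_R}^{2}\le C\bigl(1+\log\tfrac{H}{h}\bigr)^{2}|\mathcal{H}_Bv^{*}|_{1,\Omega_B}^{2}.
\]

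The main obstacle is this last inequality; everything else is bookkeeping. My plan is to work edge by edge on the interface $\Gamma$. Since $u_\Delta\in V_\Delta$ vanishes at every cross point, $v^{*}$ agrees with $u_\Delta$ in the interior of each interface edge, and at each cross point $v^{*}$ carries the value produced by the local $S_B$-minimization on the adjacent black subdomain. I would split $v^{*}=v_E+v_C$, where $v_E$ matches $u_\Delta$ on $\Gamma_\Delta$ and vanishes at every cross point, and $v_C$ is supported only at cross points. For the edge part $v_E$, the classical substructuring estimate for finite element functions vanishing at the endpoints of an edge, combined with standard trace/extension theorems and the minimum property of discrete harmonic extensions, produces the factor $(1+\log H/h)^{2}$ when one converts between $H^{1/2}$ and $H^{1/2}_{00}$ norms on individual edges. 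The nodal part $v_C$ is a sum of a few hat functions, whose energies on the red and black sides are comparable up to a logarithmic factor by direct computation. Combining these contributions delivers the comparison above.

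Finally, once (\ref{thmDN2}) is established, it expresses the spectral equivalence $\widetilde{S}_B\le\widetilde{S}\le(1+C\epsilon(1+\log(H/h))^{2})\widetilde{S}_B$ with $\epsilon=\nu_R/\nu_B$; the Rayleigh--Ritz principle then bounds $\kappa(P_{DN}^{-1}\widetilde{S})=\kappa(\widetilde{S}_B^{-1}\widetilde{S})$ by the ratio $1+C\epsilon(1+\log(H/h))^{2}$, yielding (\ref{thmDN4}).
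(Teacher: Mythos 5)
Your proposal is correct and follows essentially the same route as the paper: both bounds come from the partial-minimization characterizations of $\widetilde{S}$ and $\widetilde{S}_B$, with the upper bound reduced to controlling $\langle S_Rv^{*},v^{*}\rangle$ by $C\,(\nu_R/\nu_B)(1+\log(H/h))^2\langle S_Bv^{*},v^{*}\rangle$ via the classical edge-by-edge substructuring estimates on the shared interface. The only cosmetic difference is that the paper realizes the ``edge plus vertex'' splitting by subtracting the coarse linear interpolant $\Pi_H v^{*}$ (invoking Lemmas 3.1 and 3.3 of the cited Widlund reference) rather than your fine-grid hat functions at cross points; both deliver the same $(1+\log(H/h))^2$ factor.
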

\begin{proof}
Suppose
\begin{equation*}
\langle\widetilde{S}u_{\Delta},u_{\Delta}\rangle = \min_{u\in V_{\Gamma},u|_{\Gamma_{\Delta}} = u_{\Delta}}\langle Su,u\rangle = \langle Su_{\Gamma},u_{\Gamma}\rangle = \langle S_Ru_{\Gamma},u_{\Gamma}\rangle+\langle S_Bu_{\Gamma},u_{\Gamma}\rangle,
\end{equation*}
then the lower bound may be obtained by the definition of $\widetilde{S}_B$, i.e.
\begin{equation*}
\langle\widetilde{S}_Bu_{\Delta},u_{\Delta}\rangle = \min_{u\in V_{\Gamma},u|_{\Gamma_{\Delta}} = u_{\Delta}}\langle S_Bu,u\rangle\le\langle S_Bu_{\Gamma},u_{\Gamma}\rangle\le\langle\widetilde{S}u_{\Delta},u_{\Delta}\rangle.
\end{equation*}\\
\indent
On the other hand, suppose
\begin{equation*}
\langle\widetilde{S}_Bu_{\Delta},u_{\Delta}\rangle = \min_{u\in V_{\Gamma},u|_{\Gamma_{\Delta}} = u_{\Delta}}\langle S_Bu,u\rangle = \langle S_B\tilde{u}_{\Gamma},\tilde{u}_{\Gamma}\rangle,
\end{equation*}
then
\begin{equation}\label{manydn1}
\langle\widetilde{S}u_{\Delta},u_{\Delta}\rangle \le \langle S\tilde{u}_{\Gamma},\tilde{u}_{\Gamma}\rangle = \langle S_R\tilde{u}_{\Gamma},\tilde{u}_{\Gamma}\rangle+\langle S_B\tilde{u}_{\Gamma},\tilde{u}_{\Gamma}\rangle,
\end{equation}
Therefore, to prove the upper bound, we need to control $\langle S_R\tilde{u}_{\Gamma},\tilde{u}_{\Gamma}\rangle$ by $\langle S_B\tilde{u}_{\Gamma},\tilde{u}_{\Gamma}\rangle$. Let $\Pi_H: V_{\Gamma}\rightarrow V_{\Gamma}$ be the linear interpolation operator on the coarse grid, where $\Pi_Hv(x) = v(x)$ for any cross point $x$. Then by Lemma 3.1 and Lemma 3.3 in \cite{widlund1988iterative}, we have
\begin{align}\label{manydn2}
\langle S_R\tilde{u}_{\Gamma},\tilde{u}_{\Gamma}\rangle &= \sum_{i\in\Lambda_R}\nu_i\vert\mathcal{H}_i\tilde{u}_{\Gamma}\vert_{1,\Omega_i}^2\nonumber \\
 &\le \sum_{i\in\Lambda_R}\nu_i\left(\vert\mathcal{H}_i(\tilde{u}_{\Gamma}-\Pi_H\tilde{u}_{\Gamma})\vert_{1,\Omega_i}^2+\vert\mathcal{H}_i\Pi_H\tilde{u}_{\Gamma}\vert_{1,\Omega_i}^2\right)\nonumber\\
 &\le C\nu_R\sum_{i\in\Lambda_R}\sum_{j\in\Lambda_i}\left(\Vert(\tilde{u}_{\Gamma}-\Pi_H\tilde{u}_{\Gamma})\Vert_{H^{1/2}_{00}(\Gamma_{ij})}^2+\vert\tilde{u}_{\Gamma}(x_{ij0})-\tilde{u}_{\Gamma}(x_{ij1})\vert^2\right)\nonumber\\
 &\le C\nu_R\sum_{j\in\Lambda_B}\sum_{i\in\Lambda_j}\left(\Vert(\tilde{u}_{\Gamma}-\Pi_H\tilde{u}_{\Gamma})\Vert_{H^{1/2}_{00}(\Gamma_{ji})}^2+\vert\tilde{u}_{\Gamma}(x_{ji0})-\tilde{u}_{\Gamma}(x_{ji1})\vert^2\right)\nonumber\\
 &\le C\nu_R\sum_{j\in\Lambda_B}\nu_j^{-1}(1+\log{\frac Hh})^2\langle S_j\tilde{u}_{\Gamma},\tilde{u}_{\Gamma}\rangle\nonumber\\
 &\le C\frac{\nu_R}{\nu_B}(1+\log{\frac Hh})^2\langle S_B\tilde{u}_{\Gamma},\tilde{u}_{\Gamma}\rangle.
\end{align}\\
\indent
Combining (\ref{manydn1}) and (\ref{manydn2}), we get the upper bound. The estimate of condition number is as follows:
\begin{equation*}
\kappa(P_{DN}^{-1}\widetilde{S}) = \frac{\lambda_{max}(P_{DN}^{-1}\widetilde{S})}{\lambda_{min}(P_{DN}^{-1}\widetilde{S})}\le 1+C\frac{\nu_R}{\nu_B}(1+\log{\frac Hh})^2.
\end{equation*}
\qed
\end{proof}
\begin{theorem}
For the R-R algorithm, we assume $\gamma_R\ge C\nu_B h^{-1}$ and $0<\gamma_B\le c\nu_R H$, then it holds that
\begin{equation}
c\langle P_{RR}g,g\rangle\le\langle Gg,g\rangle \le C\left(1+\frac{\nu_R}{\nu_B}(1+\log{\frac Hh})^2\right)\langle P_{RR}g,g\rangle.
\end{equation}
Thus,
\begin{equation*}
\kappa(P_{RR}^{-1}G)\le C\left(1+\frac{\nu_R}{\nu_B}(1+\log{\frac Hh})^2\right).
\end{equation*}
\end{theorem}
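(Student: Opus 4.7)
The plan is to mimic the structure of the two-subdomain Robin-Robin analysis (Theorem 3.5), with the roles of $S_1,S_2$ replaced by the global red/black Schur complements $S_R,S_B$, while upgrading the inequality that compares the red and black energies so as to pick up the $(1+\log(H/h))^2$ factor coming from the many-subdomain geometry. Since the algebraic definitions of $P_{RR}$ and $G$ involve only $S_R$ and $S_B$ (together with the scalars $\gamma_R,\gamma_B$), the functional-calculus identities used to derive (\ref{rr1}) and (\ref{rr2}) carry over verbatim, yielding
\begin{equation*}
\langle P_{RR}g,g\rangle \asymp \frac{\gamma_B}{\gamma_R}\,\|g\|_{0,\Gamma}^2 + \frac{\gamma_R+\gamma_B}{\gamma_R^2}\,|g|_{S_B}^2,
\end{equation*}
\begin{equation*}
\frac{\gamma_R+\gamma_B}{(2+\delta)\gamma_R^2}\,|g|_{S_R}^2 + \frac{\gamma_R+\gamma_B}{\gamma_R^2}\,|g|_{S_B}^2 \le \langle Gg,g\rangle \le \frac{\gamma_R+\gamma_B}{\gamma_R^2}\,|g|_{S_R}^2 + \frac{2(\gamma_R+\gamma_B)}{\gamma_R^2}\,|g|_{S_B}^2.
\end{equation*}
Thus the entire argument reduces to comparing $\|g\|_{0,\Gamma}^2$, $|g|_{S_R}^2$, and $|g|_{S_B}^2$.

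For the lower bound $c\langle P_{RR}g,g\rangle\le\langle Gg,g\rangle$, first the $|g|_{S_B}^2$ terms cancel directly. Then I would apply the subdomain-scaled trace theorem together with a Poincar\'e--Friedrichs inequality on each red subdomain of diameter $H$ (the analog of (\ref{rr3})) to obtain an estimate of the form $\|g\|_{0,\Gamma}^2\le (CH/\nu_R)\,|g|_{S_R}^2$. Combined with the hypothesis $\gamma_B\le c\nu_R H$, the term $(\gamma_B/\gamma_R)\|g\|_{0,\Gamma}^2$ in $\langle P_{RR}g,g\rangle$ is absorbed into $(\gamma_R+\gamma_B)/\gamma_R^2\cdot|g|_{S_R}^2$ in the lower bound of $\langle Gg,g\rangle$, which closes the estimate.

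For the upper bound, after matching the $|g|_{S_B}^2$ contributions it suffices to control $|g|_{S_R}^2$ by $|g|_{S_B}^2$ and $\|g\|_{0,\Gamma}^2$. Here I would recycle the decomposition used in the proof of Theorem~4.2, inequality~(\ref{manydn2}): split $g=\Pi_H g+(g-\Pi_H g)$ via coarse interpolation, so that on each red--black edge $\Gamma_{ij}$ the fluctuation is controlled in $H^{1/2}_{00}(\Gamma_{ij})$ and the coarse part by cross-point jumps. Re-assigning each red edge/cross-point contribution to the adjacent black subdomain and invoking Lemmas~3.1 and~3.3 of \cite{widlund1988iterative} produces
\begin{equation*}
|g|_{S_R}^2 \le C\,\frac{\nu_R}{\nu_B}\,\Bigl(1+\log\frac{H}{h}\Bigr)^2 |g|_{S_B}^2.
\end{equation*}
The residual term $(\gamma_B/\gamma_R)\|g\|_{0,\Gamma}^2$ on the $P_{RR}$-side is then handled using the inverse-type inequality $|g|_{S_B}^2\le C\nu_B h^{-1}\|g\|_{0,\Gamma}^2$ (the analog of (\ref{rr4})) together with the hypothesis $\gamma_R\ge C\nu_B h^{-1}$, which guarantees that $\|g\|_{0,\Gamma}^2$ is absorbed into $(\gamma_R+\gamma_B)/\gamma_R^2\cdot|g|_{S_B}^2$. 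Combining these gives the upper bound, and the condition number estimate then follows from the Rayleigh quotient.

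The main obstacle will be the logarithmic comparison $|g|_{S_R}^2\lesssim\frac{\nu_R}{\nu_B}(1+\log(H/h))^2|g|_{S_B}^2$: one needs to route the bookkeeping of edges and cross points exactly as in the D-N proof, and to verify that the $H$-scaled Poincar\'e/trace constants assemble consistently with the thresholds $\gamma_R\ge C\nu_B h^{-1}$ and $\gamma_B\le c\nu_R H$ so that the cross terms involving $\|g\|_{0,\Gamma}^2$ can be cleanly absorbed. Everything else is essentially algebraic manipulation of the functional-calculus expressions for $P_{RR}$ and $G$.
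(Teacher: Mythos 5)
Your overall route coincides with the paper's: reduce everything to the three quantities $\Vert g\Vert_{0,\Gamma}^2$, $\vert g\vert_{S_R}^2$, $\vert g\vert_{S_B}^2$ via the functional-calculus bounds for $P_{RR}$ and $G$ (the paper quotes these from \cite{liu2014robin}), prove the upper bound by recycling the D-N comparison $\vert g\vert_{S_R}^2\le C\frac{\nu_R}{\nu_B}(1+\log\frac Hh)^2\vert g\vert_{S_B}^2$ from (\ref{manydn2}), and prove the lower bound by absorbing $\frac{\gamma_B}{\gamma_R}\Vert g\Vert_{0,\Gamma}^2$ using the smallness of $\gamma_B$. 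One detail is misattributed but harmless: in the upper bound there is nothing to absorb, since $\frac{\gamma_B}{\gamma_R}\Vert g\Vert_{0,\Gamma}^2$ sits on the $P_{RR}$ side, which is being bounded from below, so you may simply drop it; the inverse inequality $\vert g\vert_{S_B}^2\le C\nu_Bh^{-1}\Vert g\Vert_{0,\Gamma}^2$ is not needed there. The hypothesis $\gamma_R\ge C\nu_Bh^{-1}$ is what keeps $\gamma_RI-S_B$ positive definite and hence makes the two spectral-equivalence displays valid in the first place; that is where it actually enters.

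The genuine gap is in the lower bound, which you rest on $\Vert g\Vert_{0,\Gamma}^2\le (CH/\nu_R)\vert g\vert_{S_R}^2$, justified by a Poincar\'e--Friedrichs inequality ``on each red subdomain of diameter $H$.'' For an interior red subdomain $\Omega_i$ (with $\partial\Omega_i\cap\partial\Omega=\emptyset$) the local energy $\vert\mathcal{H}_ig\vert_{1,\Omega_i}^2$ annihilates constants, so no $H$-scaled Poincar\'e--Friedrichs inequality controls $\Vert\mathcal{H}_ig\Vert_{0,\Omega_i}$ or $\Vert g\Vert_{0,\partial\Omega_i}$ by it; data close to a constant on one interior red boundary defeats the claimed bound. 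The paper avoids this by not attempting to control $\Vert g\Vert_{0,\Gamma}^2$ by the red energy alone: it applies the scaled trace theorem on every subdomain (red and black), sums, and uses the Poincar\'e inequality at the scale of $\Omega$ to obtain $\Vert g\Vert_{0,\Gamma}^2\le CH^{-1}\left(\vert\mathcal{H}_Rg\vert_{1,\Omega_R}^2+\vert\mathcal{H}_Bg\vert_{1,\Omega_B}^2\right)$ --- note the factor $H^{-1}$, not $H$. The hypothesis $\gamma_B\le c\nu_RH$ exactly cancels that $H^{-1}$; the red part then contributes $\frac{C}{\gamma_R}\vert g\vert_{S_R}^2$ and the black part $C\frac{\nu_R}{\nu_B\gamma_R}\vert g\vert_{S_B}^2$, both absorbed by the lower bound for $\langle Gg,g\rangle$. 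If you replace your single-sided trace step by this two-sided version, the rest of your argument closes as written.
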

\begin{proof}
For the preconditioned system, we have the following estimates,
\begin{equation}\label{manyrr1}
\frac{\gamma_B}{\gamma_R}\Vert g\Vert_{0,\Gamma}^2+\frac{\gamma_R+\gamma_B}{\gamma_R^2}\vert g\vert_{S_B}^2\le \langle P_{RR}g,g\rangle\le\frac{\gamma_B}{\gamma_R}\Vert g\Vert_{0,\Gamma}^2+ \frac{2(\gamma_R+\gamma_B)}{\gamma_R^2}\vert g\vert_{S_B}^2,
\end{equation}
and
\begin{equation}\label{manyrr2}
\frac{\gamma_R+\gamma_B}{(2+\delta)\gamma_R^2}\vert g\vert_{S_R}^2+ \frac{\gamma_R+\gamma_B}{\gamma_R^2}\vert g\vert_{S_B}^2\le\langle Gg,g\rangle\le \frac{\gamma_R+\gamma_B}{\gamma_R^2}\vert g\vert_{S_R}^2+ \frac{2(\gamma_R+\gamma_B)}{\gamma_R^2}\vert g\vert_{S_B}^2,
\end{equation}
where $\delta > 0$ is an constant independent of $h, H$. For the details, we refer to \cite{liu2014robin}.\\
\indent
By (\ref{manyrr1}) and (\ref{manyrr2}), we may get the upper bound estimate, i.e.
\begin{align}\label{manyrrupper}
\langle Gg,g\rangle &\le \frac{\gamma_R+\gamma_B}{\gamma_R^2}\vert g\vert_{S_R}^2+ \frac{2(\gamma_R+\gamma_B)}{\gamma_R^2}\vert g\vert_{S_B}^2\nonumber\\
 &\le \frac{\gamma_R+\gamma_B}{\gamma_R^2}\left(2+C\frac{\nu_R}{\nu_B}(1+\log{\frac Hh})^2\right)\vert g\vert_{S_B}^2\\
 &\le C\left(1+\frac{\nu_R}{\nu_B}(1+\log{\frac Hh})^2\right)\langle P_{RR}g,g\rangle.
\end{align}\\
\indent
The $L^2$ norm of $g$ on the interface may be estimated by trace theorem, i.e.
\begin{equation*}
\Vert g\Vert_{0,\partial\Omega_i}^2\le CH\vert\mathcal{H}_ig\vert_{1,\Omega_i}^2+CH^{-1}\Vert\mathcal{H}_ig\Vert_{0,\Omega_i}^2\quad i\in\Lambda_R,
\end{equation*}
\begin{equation*}
\Vert g\Vert_{0,\partial\Omega_j}^2\le CH\vert\mathcal{H}_jg\vert_{1,\Omega_j}^2+CH^{-1}\Vert\mathcal{H}_jg\Vert_{0,\Omega_j}^2\quad j\in\Lambda_B.
\end{equation*}
Summing over all the subdomains, we get
\begin{align}\label{manyrrl21}
\Vert g\Vert_{0,\Gamma}^2 &\le CH(\vert\mathcal{H}_Rg\vert_{1,\Omega_R}^2+\vert\mathcal{H}_Bg\vert_{1,\Omega_B}^2) +CH^{-1}(\Vert\mathcal{H}_Rg\Vert_{0,\Omega_R}^2+\Vert\mathcal{H}_Bg\Vert_{0,\Omega_B}^2)\nonumber\\
 &\le CH^{-1}(\vert\mathcal{H}_Rg\vert_{1,\Omega_R}^2+\vert\mathcal{H}_Bg\vert_{1,\Omega_B}^2),
\end{align}
by using Poincar$\acute{\text{e}}$ inequality and the fact that $H\le H^{-1}$.
Then by the choice of $\gamma_R, \gamma_B$ and assumption $\nu_R<\nu_B$, the lower bound could be obtained as follows,
\begin{align}\label{manyrrlower}
\langle P_{RR}g,g\rangle &\le\frac{\gamma_B}{\gamma_R}\Vert g\Vert_{0,\Delta}^2+ \frac{2(\gamma_R+\gamma_B)}{\gamma_R^2}\vert g\vert_{S_B}^2\nonumber\\
&\le C\frac{\nu_R}{\gamma_R}(\vert\mathcal{H}_Rg\vert_{1,\Omega_R}^2+\vert\mathcal{H}_Bg\vert_{1,\Omega_B}^2)+ \frac{2(\gamma_R+\gamma_B)}{\gamma_R^2}\vert g\vert_{S_B}^2\nonumber\\
&\le C\frac{1}{\gamma_R}\vert g\vert_{S_R}^2+(C\frac{\nu_R}{\nu_B\gamma_R}+\frac{2(\gamma_R+\gamma_B)}{\gamma_R^2})\vert g\vert_{S_B}^2\nonumber\\
&\le \max\{C\frac{(2+\delta)\gamma_R}{\gamma_R+\gamma_B}, 2+\frac{\nu_R}{\nu_B}\cdot\frac{\gamma_R}{\gamma_R+\gamma_B}\}\langle Gg,g\rangle\nonumber\\
&\le C\langle Gg,g\rangle.
\end{align}
The condition number is then bounded by using ($\ref{manyrrupper}$) and (\ref{manyrrlower}).
\qed
\end{proof}
\begin{remark}
We may find that D-N algorithm and R-R algorithm could still benefit from the jumps of the discontinuous coefficients. If $\nu_R\ll\nu_B$, the condition numbers will be bounded by a nearly constant, that is,
\begin{equation*}
\kappa(P_{DN}^{-1}\widetilde{S})\le 1+O(\epsilon),
\end{equation*}
and
\begin{equation*}
\kappa(P_{RR}^{-1}G)\le C(1+O(\epsilon)),
\end{equation*}
where $\epsilon = \dfrac{\nu_R}{\nu_B}$.
\end{remark}
\begin{theorem}
For the N-N algorithms, we have
\begin{equation}\label{thmNN2}
\kappa(P_{NN}^{-1}\widetilde{S})\le C(1+\log{\frac Hh})^2
\end{equation}
by choosing suitable $D_{R}$ and $D_{B}$.
\end{theorem}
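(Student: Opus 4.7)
The plan is to adapt the classical Mandel--Brezina/BDD analysis to the red--black checkerboard setting. In this setting the weight operators reduce to scalar multiples of the identity, so I take $D_{R}=\delta_R^{\dag}I$, $D_{B}=\delta_B^{\dag}I$ with $\delta_R^{\dag}+\delta_B^{\dag}=1$, and the essential choice that absorbs the coefficient jump is the coefficient-weighted counting rule $\delta_R^{\dag}=\nu_B/(\nu_R+\nu_B)$ and $\delta_B^{\dag}=\nu_R/(\nu_R+\nu_B)$. With this choice the factor $\nu_i(\delta_i^{\dag})^2/\nu_j$ is bounded by a constant independent of $\nu_R,\nu_B$, which is precisely what will let the ratio $\nu_R/\nu_B$ drop out of the final estimate.

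Since $P_{NN}^{-1}$ and $\widetilde S$ are SPD, $\kappa(P_{NN}^{-1}\widetilde S)$ equals $\lambda_{\max}/\lambda_{\min}$ of the generalized eigenproblem $\widetilde S v=\lambda P_{NN}v$, and it suffices to prove
\[
c_{1}\,\langle P_{NN}^{-1}w,w\rangle\le\langle\widetilde S^{-1}w,w\rangle\le c_{2}\,\langle P_{NN}^{-1}w,w\rangle\qquad\forall w\in V_{\Delta},
\]
with $c_{2}/c_{1}\le C(1+\log(H/h))^{2}$. The lower bound on $\lambda_{\min}$ is the easier direction: for any $u\in V_{\Delta}$ write $u=D_{R}u+D_{B}u$ and use that $\widetilde S_{R}+\widetilde S_{B}\le \widetilde S$ (this follows by comparing the separate minimizations defining $\widetilde S_{R},\widetilde S_{B}$ with the joint minimization defining $\widetilde S$ via $S=S_{R}+S_{B}$, just as in the proof of (\ref{thmDN2})). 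A short Cauchy--Schwarz argument on the definition of $P_{NN}^{-1}$ then yields $\langle P_{NN}\,u,u\rangle\le C\langle\widetilde S u,u\rangle$ with $C$ independent of $h,H,\nu_R,\nu_B$.

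The serious work is the upper bound on $\lambda_{\max}$, i.e. $\langle\widetilde S u,u\rangle\le C(1+\log(H/h))^{2}\langle P_{NN}u,u\rangle$. Following the standard Neumann--Neumann route, I would construct a decomposition $u=u_{R}+u_{B}$ in $V_{\Delta}$ with $u_{R}=D_{R}u$ and $u_{B}=D_{B}u$ on $\Gamma_{\Delta}$ and bound
\[
\sum_{i\in\Lambda_{R}}\nu_{i}|\mathcal H_{i}(D_{R}u)|_{1,\Omega_{i}}^{2}+\sum_{j\in\Lambda_{B}}\nu_{j}|\mathcal H_{j}(D_{B}u)|_{1,\Omega_{j}}^{2}\le C(1+\log(H/h))^{2}\langle\widetilde S u,u\rangle.
\]
For each subdomain this is estimated edge-by-edge by inserting the coarse linear interpolant $\Pi_{H}u$, applying the Dryja--Widlund edge-cutoff lemma (Lemma 3.1 of \cite{widlund1988iterative}) to pick up the $(1+\log(H/h))^{2}$ factor, and then using coloring to transfer the red norm to neighboring black subdomains and vice versa, exactly as in (\ref{manydn2}). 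The choice of weights ensures $\nu_R(\delta_R^{\dag})^{2}/\nu_B$ and $\nu_B(\delta_B^{\dag})^{2}/\nu_R$ are both bounded by $1$, so the coefficient ratio cancels. Combining the two bounds gives the stated condition number estimate.

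The main obstacle is the control of cross-point contributions in the split $u=D_Ru+D_Bu$: multiplication by the constant weights does not preserve the vanishing-at-cross-points constraint defining $V_{\Delta}$, so the decomposition has to be understood through the minimization characterization of $\widetilde S_{R},\widetilde S_{B}$, and the transfer between colors must be done edge-by-edge via the cutoff lemma rather than globally. Once this technical point is handled, the rest of the argument is a routine coloring calculation, and the absence of $\nu_R/\nu_B$ in the final constant is entirely due to the weight choice.
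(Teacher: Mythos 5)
Your overall architecture matches the paper's: the lower bound $\lambda_{\min}(P_{NN}^{-1}\widetilde S)\ge \tfrac12$ from $\widetilde S_R+\widetilde S_B\le\widetilde S$ together with $\delta_R^{\dag}+\delta_B^{\dag}=1$, and the upper bound by reusing the edge-by-edge estimate (\ref{manydn2}) from \cite{widlund1988iterative} to transfer energy between colors, which is exactly how the paper obtains (\ref{manynn1})--(\ref{manynn2}). The gap is in the weight choice, and it is fatal as stated. The quantity that must stay bounded is not $\nu_R(\delta_R^{\dag})^2/\nu_B$ but $\nu_B(\delta_R^{\dag})^2/\nu_R$ (and symmetrically $\nu_R(\delta_B^{\dag})^2/\nu_B$): the red weight enters the estimate multiplied by the \emph{ratio of the neighboring coefficient to its own}, because bounding $\langle\widetilde S u,u\rangle$ by $\langle\widetilde S_R u,u\rangle$ costs a factor $\tfrac{\nu_B}{\nu_R}(1+\log\tfrac Hh)^2$, as in (\ref{manynn2}). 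This forces $\delta_R^{\dag}\lesssim\sqrt{\nu_R/\nu_B}$, i.e.\ the weight of a subdomain must be proportional to a power $\gamma\ge\tfrac12$ of its \emph{own} coefficient. Your choice $\delta_R^{\dag}=\nu_B/(\nu_R+\nu_B)$ inverts this: when $\nu_R\ll\nu_B$ it gives $\delta_R^{\dag}\approx1$, so
\[
(\delta_R^{\dag})^2\,\frac{\nu_B}{\nu_R}\;=\;\frac{\nu_B^3}{\nu_R(\nu_R+\nu_B)^2}\;\approx\;\frac{\nu_B}{\nu_R}\;\longrightarrow\;\infty ,
\]
and the resulting bound degrades like $\kappa\lesssim\tfrac{\nu_B}{\nu_R}(1+\log\tfrac Hh)^2$ rather than being coefficient-independent. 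The inequality you cite in support ($\nu_R(\delta_R^{\dag})^2/\nu_B\le 1$) is true for your weights but is not the inequality the argument needs; the indices of the coefficient and of the weight have been swapped in both places, so the two errors mask each other.

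The fix is simply to use $\delta_R^{\dag}=\nu_R^{\gamma}/(\nu_R^{\gamma}+\nu_B^{\gamma})$ with $\gamma\ge\tfrac12$; the paper takes $\gamma=\tfrac12$, which gives $(\delta_R^{\dag})^2\tfrac{\nu_B}{\nu_R}+(\delta_B^{\dag})^2\tfrac{\nu_R}{\nu_B}=\tfrac{\nu_R+\nu_B}{(\sqrt{\nu_R}+\sqrt{\nu_B})^2}\le1$ and hence (\ref{manynnupper}). With that correction the rest of your outline goes through; also note that the paper sidesteps your cross-point concern entirely by never splitting $u$ pointwise, instead comparing $\widetilde S$, $\widetilde S_R$, $\widetilde S_B$ on $V_{\Delta}$ directly through their minimization characterizations.
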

\begin{proof}
According to the definitions, we have
\begin{align*}
\langle\widetilde{S}u_{\Delta},u_{\Delta}\rangle &= \min_{u\in V_{\Gamma},u|_{\Gamma_{\Delta}} = u_{\Delta}}\langle Su,u\rangle = \langle Su_{\Gamma},u_{\Gamma}\rangle = \langle S_Ru_{\Gamma},u_{\Gamma}\rangle+\langle S_Bu_{\Gamma},u_{\Gamma}\rangle\\
 &\ge\langle\widetilde{S}_Ru_{\Delta},u_{\Delta}\rangle+\langle\widetilde{S}_Bu_{\Delta},u_{\Delta}\rangle.
\end{align*}
Then we estimate the lower bound of eigenvalues of $P_{NN}^{-1}(\widetilde{S}_R+\widetilde{S}_B)$ instead of $P_{NN}^{-1}\widetilde{S}$. Since $\widetilde{S}_R, \widetilde{S}_B$ are both positive definite operators, it holds that
\begin{align*}
\langle P_{NN2}^{-1}(\widetilde{S}_R+\widetilde{S}_B)u_{\Delta},u_{\Delta}\rangle &= \left((\delta_R^{\dag})^2+(\delta_B^{\dag})^2\right)\langle u_{\Delta},u_{\Delta}\rangle+(\delta_R^{\dag})^2\langle\widetilde{S}_R^{-1}\widetilde{S}_Bu_{\Delta},u_{\Delta}\rangle\\
&+(\delta_B^{\dag})^2\langle\widetilde{S}_B^{-1}\widetilde{S}_Ru_{\Delta},u_{\Delta}\rangle\ge\left((\delta_R^{\dag})^2+(\delta_B^{\dag})^2\right)\langle u_{\Delta},u_{\Delta}\rangle,
\end{align*}
therefore,
\begin{equation}\label{manynnlower}
\left((\delta_R^{\dag})^2+(\delta_B^{\dag})^2\right)\langle u_{\Delta},u_{\Delta}\rangle\le\langle P_{NN}^{-1}\widetilde{S}u_{\Delta},u_{\Delta}\rangle.
\end{equation}
As $\delta_R^{\dag}+\delta_B^{\dag} = 1$, it holds that $(\delta_R^{\dag})^2+(\delta_B^{\dag})^2\ge \frac12$. The lower bound is obtained.\\
\indent
We then estimate the upper bound. By using (\ref{manydn2}), we have
\begin{equation}\label{manynn1}
\langle\widetilde{S}u_{\Delta},u_{\Delta}\rangle\le\left(1+C\frac{\nu_R}{\nu_B}(1+\log{\frac Hh})^2\right)\langle\widetilde{S}_Bu_{\Delta},u_{\Delta}\rangle
\end{equation}
\begin{equation}\label{manynn2}
\langle\widetilde{S}u_{\Delta},u_{\Delta}\rangle\le\left(1+C\frac{\nu_B}{\nu_R}(1+\log{\frac Hh})^2\right)\langle\widetilde{S}_Ru_{\Delta},u_{\Delta}\rangle.
\end{equation}
Combining (\ref{manynn1}), (\ref{manynn2}), we get
\begin{equation}
\lambda(P_{NN}^{-1}\widetilde{S})\le(\delta_R^{\dag})^2+(\delta_B^{\dag})^2+ C((\delta_R^{\dag})^2\frac{\nu_B}{\nu_R}+(\delta_B^{\dag})^2\frac{\nu_R}{\nu_B})(1+\log{\frac Hh})^2.
\end{equation}
Similar to the case of two subdomains, if we set $\delta_R^{\dag} = 1$ or $0$, the method becomes D-N algorithm actually. This is not a good choice. One of the optimal choices could be
\begin{equation*}
\delta_R^{\dag} = \frac{\sqrt{\nu_R}}{\sqrt{\nu_R}+\sqrt{\nu_B}},\quad\delta_B^{\dag} = \frac{\sqrt{\nu_B}}{\sqrt{\nu_R}+\sqrt{\nu_B}},
\end{equation*}
then the upper bound, independent of $\nu_R, \nu_B$ is obtained as follows,
\begin{equation}\label{manynnupper}
\lambda(P_{NN2}^{-1}\widetilde{S})\le C\frac{\nu_R+\nu_B}{(\sqrt{\nu_R}+\sqrt{\nu_B})^2}(1+\log{\frac Hh})^2\le C(1+\log{\frac Hh})^2.
\end{equation}\\
\indent
Combining (\ref{manynnlower}) and (\ref{manynnupper}), we get the conclusion (\ref{thmNN2}).
\qed
\end{proof}\\
\indent
The conclusion and the proof of D-D algorithm is similar to that of N-N algorithms.\\

\indent
We have analyzed four kinds of domain decompositions in the case of many subdomains. Similar to the case of two subdomains, we find that they have two kinds of different behaviours when applied in the discontinuous coefficients case. Why there is such a difference? Intuitively, the D-N algorithm and R-R algorithm use information of half the subdomains to precondition the whole system while energy norms of $\Omega_R$ and $\Omega_B$ are controlled by each other in the N-N algorithm and D-D algorithm. Therefore, D-N algorithm and R-R algorithm may perform very well in some discontinuous coefficients case as they fully take advantage of the ratio $\frac{\nu_R}{\nu_B}$ but N-N algorithm and D-D algorithm do not have such a good property, although their condition number bounds may be independent of the discontinuous coefficients by choosing appropriate weights and they are more applicable in complex coefficients cases.
\subsection{Implementation of the algorithms}
In the subsection, we will describe the implementation of the preconditioned systems and right hand sides.\\
\indent
We first illustrate the implementation of $\widetilde{S}, \widetilde{S}_R$ and $\widetilde{S}_B$. Reorder the vectors of unknowns into the following form:
\begin{equation*}
u_R^T =
\begin{pmatrix}
{u_I^R}^T & u_{\Delta}^T & u_C^T
\end{pmatrix}\quad
u_B^T =
\begin{pmatrix}
{u_I^B}^T & u_{\Delta}^T & u_C^T
\end{pmatrix}\quad
u^T =
\begin{pmatrix}
u_I^T & u_{\Delta}^T & u_C^T
\end{pmatrix},
\end{equation*}
then we have
\begin{equation*}
\begin{pmatrix}
A_{II} & A_{I\Delta} & A_{IC}\\
A_{\Delta I} & A_{\Delta\Delta} & A_{\Delta C}\\
A_{CI} & A_{C\Delta} & A_{CC}\\
\end{pmatrix}
\begin{pmatrix}
u_I \\ u_{\Delta} \\ u_C
\end{pmatrix} =
\begin{pmatrix}
f_I \\ f_{\Delta} \\ f_C
\end{pmatrix}.
\end{equation*}
The systems of $u_R, u_B$ are similar.
The Schur complement system on $\Gamma_{\Delta}$ is as follows:
\begin{equation*}
\widetilde{S}u_{\Delta} = \tilde{f}_{\Delta},
\end{equation*}
where
\begin{equation*}
\widetilde{S} = A_{\Delta\Delta}-
\begin{pmatrix}
A_{\Delta I} & A_{\Delta C}
\end{pmatrix}
\begin{pmatrix}
A_{II} & A_{IC}\\
A_{CI} & A_{CC}\\
\end{pmatrix}^{-1}
\begin{pmatrix}
A_{I\Delta} \\ A_{C\Delta}
\end{pmatrix},
\end{equation*}
and
\begin{equation*}
\tilde{f}_{\Delta} = f_{\Delta}-
\begin{pmatrix}
A_{\Delta I} & A_{\Delta C}
\end{pmatrix}
\begin{pmatrix}
A_{II} & A_{IC}\\
A_{CI} & A_{CC}\\
\end{pmatrix}^{-1}
\begin{pmatrix}
f_I \\ f_C
\end{pmatrix}.
\end{equation*}
Similarly, we may get $\widetilde{S}_R, \widetilde{S}_B$. In the following, we should know how $\widetilde{S}$ and $\widetilde{S}_R^{-1}, \widetilde{S}_B^{-1}$ act on a given vector $u_{\Delta}$. To determine $\widetilde{S}u_{\Delta}$, we first solve the following coarse problem
\begin{equation*}
S_{CC}u_C = \hat{f}_C,
\end{equation*}
where
\begin{equation*}
S_{CC} = A_{CC}-A_{CI}A_{II}^{-1}A_{IC},
\end{equation*}
and
\begin{equation*}
\tilde{f}_C = A_{C\Delta}u_{\Delta}-A_{CI}A_{II}^{-1}A_{I\Delta}u_{\Delta}.
\end{equation*}
Then we need to solve subdomain Dirichlet problems with boundary data given by $u_{\Delta}$ and $u_C$, and finally obtain $\widetilde{S}u_{\Delta}$. $\widetilde{S}_Ru_{\Delta}, \widetilde{S}_Bu_{\Delta}$ could be obtained in the same way. To compute $\widetilde{S}_R^{-1}u_{\Delta}$, we need to solve the following problem,
\begin{equation}\label{matinvS}
\begin{pmatrix}
A_{II}^R & A_{I\Delta}^R & A_{IC}^R\\
A_{\Delta I}^R & A_{\Delta\Delta}^R & A_{\Delta C}^R\\
A_{CI}^R & A_{C\Delta}^R & A_{CC}^R\\
\end{pmatrix}
\begin{pmatrix}
w_I \\ w_{\Delta} \\ w_C\\
\end{pmatrix} =
\begin{pmatrix}
0 \\ u_{\Delta} \\ 0
\end{pmatrix}.
\end{equation}
By eliminating the unknowns of type $I$ and $\Delta$, we get the coarse problem
\begin{equation*}
\widetilde{S}_{CC}^Rw_C = \tilde{f}_C^R,
\end{equation*}
where
\begin{equation*}
\widetilde{S}_{CC}^R = A_{CC}^R-
\begin{pmatrix}A_{CI}^R & A_{C\Delta}^R\end{pmatrix}
\begin{pmatrix}
A_{II}^R & A_{I\Delta}^R\\
A_{\Delta I}^R & A_{\Delta\Delta}^R\\
\end{pmatrix}^{-1}
\begin{pmatrix}
A_{IC}^R \\ A_{\Delta C}^R
\end{pmatrix},
\end{equation*}
and
\begin{equation*}
\tilde{f}_C^R = -
\begin{pmatrix}A_{CI}^R & A_{C\Delta}^R\end{pmatrix}
\begin{pmatrix}
A_{II}^R & A_{I\Delta}^R\\
A_{\Delta I}^R & A_{\Delta\Delta}^R\\
\end{pmatrix}^{-1}
\begin{pmatrix}
0 \\ u_{\Delta}
\end{pmatrix}.
\end{equation*}
After solving $w_C$, we may substitute it into (\ref{matinvS}) and solve local problems, then $w_{\Delta}$ is the desired vector $\widetilde{S}_R^{-1}u_{\Delta}$. The implementation of $\widetilde{S}_B^{-1}$ is similar.\\
\indent
Then we illustrate the implementation of $S_R$ and $S_B$. We take $S_R$ as an example. Reorder the vectors $u_R$ into the following form:
\[
u_R^T = \begin{pmatrix}{u_I^R}^T u_{\Gamma}^T \end{pmatrix}.
\]
Then we have
\begin{equation}
\begin{pmatrix}
A_{II}^R & A_{I\Gamma}^R\\
A_{\Gamma I}^R & A_{\Gamma\Gamma}^R
\end{pmatrix}
\begin{pmatrix}
u_I^R\\u_{\Gamma}
\end{pmatrix} =
\begin{pmatrix}
f_I^R\\f_{\Gamma}^R
\end{pmatrix},
\end{equation}
By eliminating the interior component, we have
\[S_Ru_{\Gamma} = f_R,\]
where
\[
S_R = A_{\Gamma\Gamma}^R-A_{\Gamma I}^R{A_{II}^R}^{-1}A_{I\Gamma}^R,
\]
and
\[
f_R = f_{\Gamma}^R-A_{\Gamma I}^R{A_{II}^R}^{-1}f_I^R.
\] $S_B, f_B$ are obtained in the same way.\\
\indent
The implementation of $S_R$ and $S_B$ acting on a given vector is realized by Gaussian block elimination. And the implementation of $S_R^{-1}, S_B^{-1}, (\gamma_RM+S_R)^{-1}, (\gamma_BM+S_B)^{-1}, (\gamma_RM-S_B)^{-1}$ acting on a vector may follow the same way of $\widetilde{S}_R^{-1}$ with the right hand side in the following form,\[g^T = \begin{pmatrix} 0 & g_{\Delta}^T & g_C^T\end{pmatrix}^T,\] as they act on vectors of $V_{\Gamma}$.\\
\indent
Now the only thing left is the right hand side of R-R system. In fact, the R-R algorithm should be treated carefully and the system after simplification is
\begin{equation*}
M\left((\gamma_RM-S_B)^{-1}-(\gamma_RM+S_R)^{-1})\right)Mg_R = M(\gamma_RM-S_B)^{-1}f_B+M(\gamma_RM+S_R)^{-1}f_R.
\end{equation*}
and $P_{RR}^{-1} = (\gamma_R+\gamma_B)(\gamma_BM+S_B)^{-1}-M^{-1}$, where $M$ is the mass matrix of $V_{\Gamma}$.\\
\indent
So far, the implementation of the algorithms is completed.

\section{Numerical experiments}
In this section, we perform some numerical experiments to verify our conclusions. We consider the following diffusion problem with zero Dirichlet boundary condition,
\begin{equation}
-\nabla\cdot(\nu(\mathbf{x})\nabla u) = f\quad {\rm in\ }\Omega,
\end{equation}
where $\Omega = (0,1)^2$ and $f = -2(x^2+y^2-x-y)$.\\
\indent
We first test the case of two subdomains with $\Omega_1, \Omega_2$ symmetric with respect to $\Gamma = \{\frac 12\}\times(0,1)$, i.e. $\Omega_1 = (0,\frac 12)\times(0,1), \Omega_2 = (\frac 12,1)\times(0,1)$. The iteration stops when the relative error is less than $tol = 10^{-8}$. The weights of N-N algorithm and D-D algorithms are set to be optimal and the Robin parameters of R-R algorithm are $\gamma_1 = \nu_2/h$ and $\gamma_2 = \nu_1$.\\
\begin{table}[H]
\caption{Numbers of iterations of D-N algorithm and N-N algorithm with different parameters}
  \centering\label{2sdnnn}
    \begin{tabular}{ccccccccc}
    \toprule
    \toprule
\multirow{2}{*}{$\nu_1$} &\multirow{2}{*}{$\nu_2$} &\multirow{2}{*}{$h$} & \multicolumn{3}{c}{D-N}&\multicolumn{3}{c}{N-N}\cr
\cmidrule(lr){4-6} \cmidrule(lr){7-9}
& & &$\theta_{opt}$& 1/2 & 1 &$\theta_{opt}$&1/3&2/3\cr
    \midrule
    $10^{-2}$  & $10^{2}$ & 1/16 & 1 & 27 & 3 & 1 & 18 & 16 \cr
    $10^{-4}$  & $10^{4}$ & 1/16 & 1 & 27 & 1 & 1 & 17 & 17 \cr
    $10^{-6}$  & $10^{6}$ & 1/16 & 1 & 27 & 1 & 1 & 17 & 17 \cr
    $10^{-2}$  & $10^{2}$ & 1/32 & 1 & 27 & 3 & 1 & 18 & 16 \cr
    $10^{-4}$  & $10^{4}$ & 1/32 & 1 & 27 & 1 & 1 & 17 & 17 \cr
    $10^{-6}$  & $10^{6}$ & 1/32 & 1 & 27 & 1 & 1 & 17 & 17 \cr
    $10^{-2}$  & $10^{2}$ & 1/64 & 1 & 27 & 3 & 1 & 18 & 16 \cr
    $10^{-4}$  & $10^{4}$ & 1/64 & 1 & 27 & 2 & 1 & 17 & 17 \cr
    $10^{-6}$  & $10^{6}$ & 1/64 & 1 & 27 & 1 & 1 & 17 & 17 \cr
    \bottomrule
    \bottomrule
    \end{tabular}
\end{table}

\begin{table}[H]
\caption{The numbers of iterations of D-D algorithm and R-R algorithm with different parameters}
  \centering\label{2sddrr}
    \begin{tabular}{ccccccccc}
    \toprule
    \toprule
\multirow{2}{*}{$\nu_1$} &\multirow{2}{*}{$\nu_2$} &\multirow{2}{*}{$h$} & \multicolumn{3}{c}{D-D}&\multicolumn{3}{c}{R-R}\cr
\cmidrule(lr){4-6} \cmidrule(lr){7-9}
& & &$\theta_{opt}$&1/3&2/3&$\theta_{opt}$&1/2&1\cr
    \midrule
    $10^{-2}$  & $10^{2}$ & 1/16 & 1 & 18 & 16 & 2 & 27 & 2 \cr
    $10^{-4}$  & $10^{4}$ & 1/16 & 1 & 17 & 17 & 1 & 27 & 1 \cr
    $10^{-6}$  & $10^{6}$ & 1/16 & 1 & 17 & 17 & 1 & 27 & 1 \cr
    $10^{-2}$  & $10^{2}$ & 1/32 & 1 & 18 & 16 & 2 & 27 & 2 \cr
    $10^{-4}$  & $10^{4}$ & 1/32 & 1 & 17 & 17 & 1 & 27 & 1 \cr
    $10^{-6}$  & $10^{6}$ & 1/32 & 1 & 17 & 17 & 1 & 27 & 1 \cr
    $10^{-2}$  & $10^{2}$ & 1/64 & 1 & 18 & 16 & 2 & 27 & 2 \cr
    $10^{-4}$  & $10^{4}$ & 1/64 & 1 & 17 & 17 & 1 & 27 & 1 \cr
    $10^{-6}$  & $10^{6}$ & 1/64 & 1 & 17 & 17 & 1 & 27 & 1 \cr
    \bottomrule
    \bottomrule
    \end{tabular}
\end{table}
Table \ref{2sdnnn} and Table \ref{2sddrr} show the numbers of iterations of different algorithms with several groups of parameters. $\theta_{opt}$ denotes the optimal $\theta$ mentioned above. We may find that if we choose the optimal $\theta$, D-N algorithm, N-N algorithm and D-D algorithm converge in one step which corresponds to the zero convergence rate in Theorem \ref{0convergence}. For D-N algorithm and R-R algorithm, if $\theta$ is set to be 1, the convergence rate will thoroughly rely on the ratio $\nu_1/\nu_2$ and the data in Table \ref{2sdnnn} and Table \ref{2sddrr} confirm the conclusion because the iteration counts decrease with the jump in the coefficient increases. For N-N algorithm and D-D algorithm, we find that the iterations are not affected by the discontinuous coefficients $\nu_1, \nu_2$ which supports the theoretical results. Besides, we note that all of them are independent of mesh size $h$.\\
\\
\indent
In the second experiment, we test the case of two subdomains with $\Omega_1, \Omega_2$ nonsymmetric. The first interface $\Gamma_1$ is set to be $\{\frac 14\}\times(0,1)$ and the second case is $\Gamma_2 = \{\frac 34\}\times(0,1)$. The mesh size $h$ is fixed to be $1/64$. The relaxation parameter is always the optimal one of symmetric case. The other settings are the same as the previous experiment.\\
\begin{table}[H]
\caption{The numbers of iterations with different discontinuous coefficients}
  \centering\label{2ns}
    \begin{tabular}{cccccccccc}
    \toprule
    \toprule
\multirow{2}{*}{$\nu_1$} &\multirow{2}{*}{$\nu_2$} & \multicolumn{2}{c}{D-N} & \multicolumn{2}{c}{N-N} & \multicolumn{2}{c}{D-D} & \multicolumn{2}{c}{R-R}\cr
\cmidrule(lr){3-4} \cmidrule(lr){5-6} \cmidrule(lr){7-8} \cmidrule(lr){9-10}
& &$\Gamma_1$&$\Gamma_2$&$\Gamma_1$&$\Gamma_2$&$\Gamma_1$&$\Gamma_2$&$\Gamma_1$&$\Gamma_2$\cr
    \midrule
    $10^{-1}$  & $10^{1}$ & 4 & 4 & 11 & 14 & 11 & 14 & 5 & 5\cr
    $10^{-2}$  & $10^{2}$ & 2 & 2 & 11 & 14 & 11 & 14 & 2 & 2\cr
    $10^{-3}$  & $10^{3}$ & 2 & 2 & 11 & 14 & 11 & 14 & 2 & 2\cr
    $10^{-4}$  & $10^{4}$ & 1 & 1 & 11 & 14 & 11 & 14 & 1 & 1\cr
    $10^{-5}$  & $10^{5}$ & 1 & 1 & 11 & 14 & 11 & 14 & 1 & 1\cr
    $10^{-6}$  & $10^{6}$ & 1 & 1 & 11 & 14 & 11 & 14 & 1 & 1\cr
    \bottomrule
    \bottomrule
    \end{tabular}
\end{table}
\indent
In Table \ref{2ns}, the convergence of the different methods is shown for different coefficient ratios in nonsymmetric case. The D-N algorithm and R-R algorithm with optimal $\theta$ converge very quickly. This is because their convergence rates are dominated by the coefficient ratio and the smaller the ratio is, the faster the iteration converges. For the N-N algorithm and D-D algorithm with optimal weights, the convergence rates are independent of the discontinuous coefficients. In addition, by comparing the numbers of iteration with $\Gamma_1, \Gamma_2$, we may see that the influence of using different interfaces is little.\\
\\
\indent
At last, we test the case of many subdomains with coefficients red-black checkerboard distribution. We use the PCG method and the terminal precision is chosen as $10^{-6}$. The weights of N-N algorithm and D-D algorithm are optimal and the Robin parameters $\gamma_R, \gamma_B$ are set to be $16\nu_B/h$ and $\nu_RH/2$, respectively.
\begin{table}[H]
\caption{The numbers of iteration for $8\times 8$ subdomains with $\nu_R = \nu_B = 1$}
  \centering\label{manyHh}
    \begin{tabular}{ccccc}
    \toprule
    \toprule
$\frac Hh$& D-N & N-N & D-D & R-R \cr
    \midrule
4 & 15 & 8  & 7  & 15 \cr
8 & 17 & 10 & 8  & 17 \cr
16& 19 & 11 & 9  & 19 \cr
32& 21 & 13 & 10 & 21 \cr
64& 23 & 14 & 11 & 23 \cr
    \bottomrule
    \bottomrule
    \end{tabular}
\end{table}

\begin{table}[H]
\caption{The numbers of iteration for case of many subdomains with $\nu_R = \nu_B = 1$ and fixed $\frac Hh = 8$}
  \centering\label{manyNN}
    \begin{tabular}{ccccc}
    \toprule
    \toprule
$N\times N$& D-N & N-N & D-D & R-R \cr
    \midrule
$4\times4$  & 9  & 5  & 4 & 10 \cr
$8\times8$  & 17 & 10 & 8 & 17 \cr
$16\times16$& 20 & 10 & 8 & 20 \cr
$24\times24$& 20 & 10 & 8 & 20 \cr
$32\times32$& 20 & 10 & 7 & 20 \cr
    \bottomrule
    \bottomrule
    \end{tabular}
\end{table}
\indent
Table \ref{manyHh} shows the corresponding iteration numbers of the four algorithms when the mesh is refined. The slight increases of the iteration numbers explain that the condition number is growing slowly as $\frac Hh$ increases. Then we fix the degrees of freedom in each subdomain, that is, $\frac Hh$ is a constant, we get the results in Table $\ref{manyNN}$. We may see that the iteration number of each algorithm is stable which reflects that the iteration numbers rely only on $\frac Hh$.

\begin{table}[H]
\caption{The numbers of iteration for $8\times 8$ subdomains with different discontinuous coefficients}
  \centering\label{manydc}
    \begin{tabular}{cccccccc}
    \toprule
    \toprule
$\nu_B$& $\nu_R$ & D-N & N-N & D-D & R-R \cr
    \midrule
$10^{1}$ & $10^{-1}$ & 4 &  17  & 14  & 4 \cr
$10^{2}$ & $10^{-2}$ & 2 &  17  & 14  & 2 \cr
$10^{3}$ & $10^{-3}$ & 2 &  17  & 14  & 2 \cr
$10^{4}$ & $10^{-4}$ & 1 &  17  & 14  & 1 \cr
$10^{5}$ & $10^{-5}$ & 1 &  17  & 14  & 1 \cr
$10^{6}$ & $10^{-6}$ & 1 &  17  & 14  & 1 \cr
    \bottomrule
    \bottomrule
    \end{tabular}
\end{table}
\indent
The results in table \ref{manydc} corresponds to the numerical experiment with fixed numbers of subdomains, fixed $\frac Hh = 8$ and increasing jumps in the coefficients. We may see that the iteration numbers of D-N algorithms and R-R algorithm decrease rapidly as the jumps increase which confirms our theory and the iteration numbers of N-N algorithms and D-D algorithm are stable.
\bibliographystyle{plain}
\bibliography{reference}
\end{document}